\documentclass{article}
\usepackage[margin=1in]{geometry}
\usepackage[utf8]{inputenc}

\usepackage{amsmath}
\usepackage{bm}
\usepackage{amssymb}
\usepackage{graphicx}
\usepackage{xcolor}

\usepackage[
backend=biber,
style=numeric,
sorting=ynt    
]{biblatex}
\usepackage{booktabs}

\usepackage{mathtools}

\newcounter{myalg}
\usepackage{amsthm}
\theoremstyle{plain}
\newtheorem{theorem}{Theorem}
\newtheorem{corollary}{Corollary}
\newtheorem{proposition}{Proposition}

\theoremstyle{definition}
\newtheorem{definition}{Definition}
\theoremstyle{remark}
\newtheorem{remark}{Remark}

\DeclareMathOperator\supp{supp}

\newcommand{\w}{{\bm w}}
\newcommand{\x}{{\bm x}}
\newcommand{\X}{{\bm X}}
\newcommand{\z}{{\bm z}}
\newcommand{\Z}{{\bm Z}}
\newcommand{\s}{{\bm s}}
\newcommand{\uu}{{\bm u}}
\newcommand{\vv}{{\bm v}}

\newcommand{\y}{{\bm y}}
\newcommand{\Y}{{\bm Y}}
\newcommand{\D}{{\bm D}}

\title{On the Order-Conditional Optimality of Gaffke's Bound}
\author{
  George Bissias \qquad Erik Learned-Miller \\
  University of Massachusetts Amherst
}
\date{July 24, 2026}

\begin{document}

\maketitle

\begin{abstract}
    Let $\X = (X_1, \ldots, X_n)$ be a random vector from any Borel probability law on $\mathbb{R}_+^n$. We revisit the problem of deriving a lower confidence bound (LCB) on a scalar parameter of that law.  We recast classical work, beginning with Buehler~\cite{Buehler1957}, in purely probabilistic terms to form a more accessible and extensible framework. We then specialize the framework to the case where the components of $\X$ are independent. In this context, we prove that Gaffke's bound is Buehler optimal for the order that it induces with respect to the maximum marginal mean parameter: $\max_{i \in [n]} E_Q[X_i]$, which reduces to the common mean when the $X_i$ are independent and identically distributed. That is to say,  no other valid LCB that orders samples in the same way as Gaffke's bound can improve on it with respect to this parameter. 
\end{abstract}

\section{Introduction}

We develop a general framework for constructing and comparing one-sided confidence bounds for a scalar parameter of a joint probability law from an observed vector $\X = (X_1, \ldots, X_n) \in \mathbb{R}_+^n$. Here, a parameter can be any prespecified scalar function of the probability law. The framework does not require the components of $\X$ to be independent or identically distributed. Given a class of conceivable joint laws, a scalar parameter of those laws, and a total preorder on the sample space, the framework identifies a conditionally optimal confidence bound among all bounds consistent with that preorder. This construction generalizes the development of order-determined bounds of Learned-Miller~\cite{learnedmiller2025admissibilityboundsmeandiscrete}. We then apply this framework to establish the conditional optimality of Gaffke’s bound~\cite{gaffke2005three} on the mean of the individual components of $\X$ in the case when they are independent and identically distributed. We further generalize this result in two ways. First, we show that if the components of $\X$ share the same mean and are independent, but are not identically distributed, then Gaffke's bound remains conditionally optimal when the parameter of interest is their common mean. Second, when the components of $\X$ are merely independent, but do not necessarily share the same mean, we show that Gaffke's bound is conditionally optimal when the parameter of interest is the maximum mean among all those components. 

Our results complement recent findings of Ming et al.~\cite{ming2026gaffke}, who show that, for independent bounded observations having a common mean, Gaffke’s confidence interval is inadmissible: there exists another valid confidence-interval procedure whose interval is contained in Gaffke’s interval at every input and is strictly smaller at some inputs. This does not conflict with our result, which establishes optimality only among lower confidence bounds consistent with the total preorder induced by Gaffke’s bound.

The construction developed below belongs to the classical lineage of Neyman’s inversion principle~\cite{Neyman1937} and Buehler’s theory of order-constrained confidence bounds~\cite{Buehler1957,GuerreroDavid1985}. We nevertheless develop it from first principles, beginning only with the requirement that a data-dependent lower bound exceed the true parameter with probability at most $\alpha$. From this elementary guarantee, we introduce model-indexed exceptional regions, show how they generate valid bounds by optimization over the models not excluded by the observation, and then determine the largest valid bound that respects a prescribed ordering of the sample space. In contrast to classical literature, our approach is purely probabilistic and begins with a total preorder over the sample space instead of an ordering statistic. As a result, it allows us to capture bounds arising from orders that cannot be expressed in terms of a threshold statistic. For example, when $n \geq 2$, the lexicographic order on $\mathbb{R}^n_+$ is not induced by any scalar statistic $T: \mathbb{R}^n_+ \rightarrow \bar{\mathbb{R}}$~\cite{mandler2021lexicographic}. We note specifically that there \emph{does} exist such a representation when the sample space is countable.

Other recent works have sought similar ends using the work of Learned-Miller~\cite{learnedmiller2025admissibilityboundsmeandiscrete} as a point of departure.  Bissias~\cite{bissias2026algorithmsapproximatingconditionallyoptimal} extended the notion of conditional optimality to total preorders, but only in a finite sample setting. Phan and Learned-Miller~\cite{phan2026automatedconfidenceboundprovers} derived similar results to the present work in the more general context of continuous and unbounded sample spaces. However, their bounds are limited to i.i.d.~laws and use the classical approach of ordering according to a sample statistic. 

The main application builds on Vlassis and Thomas~\cite{vlassis2026exact}, who proved finite-sample validity of Gaffke’s statistic for independent, not necessarily identically distributed, nonnegative random variables. Here we show that, for every fixed $\alpha \in (0,1)$, Gaffke’s lower confidence bound for the maximum marginal mean coincides with the nested Buehler bound generated by its own sample preorder. We also show that the defining infimum can be approached, at each sample, by an explicit family of i.i.d. distributions supported on zero and one additional value.

The same equality and conditional-optimality conclusion extends to any product-law subclass having aggregate support $\mathbb{R}_+^n$ and containing every i.i.d. law whose common marginal is supported on $\{0, s\}$, for each $s \geq 0$. In particular, it applies to the i.i.d. model and to the independent common-mean model. In the latter model, the maximum marginal mean is simply the common marginal mean.

\section{Preliminaries}

For any $n \geq 1$, let $\mathcal{Q}$ be any collection of probability measures on the measurable space $(\mathbb{R}_+^n, \mathcal{B}^n)$, where $\mathbb{R}_+$ denotes the half-line of non-negative reals and $\mathcal{B}^n$ is the Borel $\sigma$-algebra on $\mathbb{R}_+^n$. By $\bar{\mathbb{R}}$ we denote the extended reals, i.e. $\bar{\mathbb{R}} \coloneqq \{-\infty\} \cup \mathbb{R} \cup \{\infty\}$. For each $Q \in \mathcal{Q}$, let $\pi[Q] \in \bar{\mathbb{R}}$ denote a specified scalar parameter of joint law $Q$ and let $\X = (X_1, \ldots, X_n) \sim Q$ denote a random sample from $Q$. Parameter $\pi[Q]$ need not be finite. 
For convenience, we define the \emph{closed model support}, $\Omega \subseteq \mathbb{R}_+^n$, by
\begin{equation*}
    \Omega \coloneqq \overline{\bigcup_{Q \in \mathcal{Q}} \supp(Q)},
\end{equation*}
which is the smallest closed subset of $\mathbb{R}^n$ such that $\forall Q \in \mathcal{Q}$, $Q[\Omega] = 1$. Unless an additional restriction is stated explicitly, the components of $\X$ are not assumed to be independent or identically distributed. Finally, we adopt standard notation for sets of natural numbers: $[i] \coloneqq \{1, \ldots, i\}$. When $i$ is less than 1, $[i]$ is defined to be empty.

\begin{definition}
\label{def:total_preorder}
    A total preorder $D$, defining the binary operator $\lesssim_D$, is characterized by the following properties
    \begin{itemize}
        \item $\forall \x \in \Omega, \x \lesssim_D \x$.
        \item $\forall \x, \y, \z \in \Omega, \x \lesssim_D \y$ and $\y \lesssim_D \bm z$ implies that $\x \lesssim_D \bm z$.
        \item $\forall \x, \y \in \Omega, \x \lesssim_D \y$ or $\y \lesssim_D \x$.
    \end{itemize}
    When $\x \lesssim_D \y$ and $\y \lesssim_D \x$ we write $\x \sim_D \y$. And by $\x <_D \y$ we indicate that $\x \lesssim_D \y$ and $\x \not \sim_D \y$. Total preorders are total orders over the equivalence classes defined by $\sim_D$. 
\end{definition}

\begin{definition}
    Any vector $\z \in \mathbb{R}^n$ is \emph{homogeneous} if $\exists \zeta \in \mathbb{R}$ such that $z_{(i)} = \zeta$ for each $i \in [n]$.
\end{definition}

\section{Validity}
\label{sec:validity}

In this section we develop broad criteria under which a data-dependent lower bound $\psi$ on a scalar parameter $\pi$ of an unknown distribution $Q \in \mathcal{Q}$ can be considered valid at the $1-\alpha$ level, i.e. when will the bound be no greater than $\pi[Q]$ with probability at least $1-\alpha$. Although fundamentally statistical in nature, our development is purely probabilistic.

\begin{definition}
\label{def:alpha_rej_region}
    For an arbitrary $Q \in \mathcal{Q}$ and $\alpha \in [0,1]$, an $\alpha$-level \emph{rejection region} for $Q$ is any measurable region $R \subseteq \Omega$ such that $Q[R] \leq \alpha$.  
\end{definition}

\begin{definition}
\label{def:alpha_bound}
For any $Q \in \mathcal{Q}$, a \emph{lower confidence bound with miscoverage $\alpha$ on $\pi[Q]$}, or $\alpha$-LCB for brevity, is any measurable function $\psi: \Omega \rightarrow \bar{\mathbb{R}}$ where
    \begin{equation*}
        \{\x \in \Omega: \psi(\x) > \pi[Q]\}
    \end{equation*}
    is an $\alpha$-level rejection region, i.e.,
    \begin{equation*}
        Q[\psi(\X) > \pi[Q]] \leq \alpha.
    \end{equation*}
When $\alpha$ is unspecified, we refer to $\psi$ generically as a statistic.
\end{definition} 

\begin{definition}
\label{eq:obliv_alpha_bound}
    Statistic $\psi : \Omega \rightarrow \bar{\mathbb{R}}$ is a \emph{$\mathcal{Q}$-valid} $\alpha$-LCB if it is an $\alpha$-LCB on $\pi[Q]$ for every $Q \in \mathcal{Q}$. When the set $\mathcal{Q}$ is known implicitly by context we often drop it and write simply that $\psi$ is \emph{a valid $\alpha$-LCB}.
\end{definition}

In the present work, $\alpha$-LCB $\psi$ is never parameterized by an underlying distribution; it is a function only of the sample $\x$.

\begin{definition}
\label{def:rej_map}
    A \emph{model-indexed rejection family} $\mathcal{R}: \mathcal{Q} \rightarrow \mathcal{B}^n$, or \emph{rejection family} for brevity, is a mapping that assigns to each $Q \in \mathcal{Q}$ a rejection region. If, for some $\alpha \in [0,1]$, every rejection region is $\alpha$-level, then $\mathcal{R}$ is said to be an \emph{$\alpha$-level rejection family}.
\end{definition}

\begin{definition}
\label{def:consistent_map_bound}
    For any statistic $\psi: \Omega \rightarrow \bar{\mathbb{R}}$, the rejection family $\mathcal{R}_\psi: \mathcal{Q} \rightarrow \mathcal{B}^n$ \emph{consistent} with $\psi$ is the map giving for each $Q \in \mathcal{Q}$ the rejection region 
    \begin{equation*}
        \mathcal{R}_\psi(Q) \coloneqq \{\x \in \Omega: \psi(\x) > \pi[Q]\}.
    \end{equation*}
\end{definition}

The results in this document present a stand-alone theory of lower confidence bounds. Nevertheless, they can also be interpreted through the lens of hypothesis testing. A $\mathcal{Q}$-valid $\alpha$-LCB may be interpreted as defining, for every finite threshold $t$, a level-$\alpha$ test of the one-sided composite hypothesis
\begin{equation*}
    H_{0,t}: \pi[Q] \leq t
    \qquad \text{against} \qquad
    H_{1,t}: \pi[Q] > t.
\end{equation*}
The test rejects $H_{0,t}$ when $\psi(\x) > t$. As a result, for any $Q \in \mathcal{Q}$ satisfying $H_{0,t}$,
\begin{equation*}
    \{\x \in \Omega : \psi(\x) > t\}
    \subseteq
    \{\x \in \Omega : \psi(\x) > \pi[Q]\},
\end{equation*}
so that
\begin{equation*}
    Q[\psi(\X) > t] \leq \alpha
\end{equation*}
whenever $\psi$ is valid. In particular, for any $Q \in \mathcal{Q}$ with finite $\pi[Q]$, setting $t = \pi[Q]$ makes the rejection region of this test precisely $\mathcal{R}_{\psi}(Q)$. Thus, the model-indexed rejection family records the rejection region obtained at the parameter value associated with each individual model $Q$, while the statistic $\psi$ determines the corresponding nested family of one-sided tests over all thresholds $t$.

\begin{proposition}
\label{prop:alpha_bound_implies_alpha_level}
    If $\psi: \Omega \rightarrow \bar{\mathbb{R}}$ is any valid $\alpha$-LCB, then $\mathcal{R}_\psi$ is an $\alpha$-level rejection family.
\end{proposition}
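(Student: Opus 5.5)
This is essentially an unwinding of definitions, so the plan is to check the two requirements of Definition~\ref{def:rej_map} for $\mathcal{R}_\psi$, as introduced in Definition~\ref{def:consistent_map_bound}. First, each $\mathcal{R}_\psi(Q)$ must be a measurable subset of $\Omega$, so that $\mathcal{R}_\psi$ really maps into $\mathcal{B}^n$. Second, each $\mathcal{R}_\psi(Q)$ must be an $\alpha$-level rejection region in the sense of Definition~\ref{def:alpha_rej_region}.

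For measurability, fix $Q \in \mathcal{Q}$ and write $t = \pi[Q] \in \bar{\mathbb{R}}$. Then $\mathcal{R}_\psi(Q) = \psi^{-1}((t,\infty])$. The interval $(t,\infty]$ is a Borel subset of $\bar{\mathbb{R}}$ in all three cases: $t = -\infty$ gives $\bar{\mathbb{R}}\setminus\{-\infty\}$, $t = \infty$ gives $\emptyset$, and finite $t$ gives an open ray. Since $\psi$ is measurable by Definition~\ref{def:alpha_bound}, the preimage lies in the trace $\sigma$-algebra on $\Omega$. Because $\Omega$ is closed and hence Borel, this preimage belongs to $\mathcal{B}^n$.

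For the level condition, note that $\psi$ is $\mathcal{Q}$-valid, so by Definition~\ref{eq:obliv_alpha_bound} it is an $\alpha$-LCB on $\pi[Q]$ for this $Q$. Definition~\ref{def:alpha_bound} then states directly that $Q[\mathcal{R}_\psi(Q)] = Q[\psi(\X) > \pi[Q]] \le \alpha$. Hence $\mathcal{R}_\psi(Q)$ is an $\alpha$-level rejection region. Since $Q$ was arbitrary, $\mathcal{R}_\psi$ is an $\alpha$-level rejection family.

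There is no real obstacle here. The only point needing care is the case where $\pi[Q]$ is infinite, which is handled by the case check on $(t,\infty]$ above.
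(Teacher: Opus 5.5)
Your proof is correct and follows the paper's argument, which consists exactly of your level-condition step: validity gives $Q[\mathcal{R}_\psi(Q)] = Q[\psi(\X) > \pi[Q]] \le \alpha$ for each $Q \in \mathcal{Q}$. Your additional check that each $\mathcal{R}_\psi(Q)$ lies in $\mathcal{B}^n$, including the cases $\pi[Q] = \pm\infty$, is correct and is a detail the paper leaves implicit.
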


\begin{proof}
    Since $\psi$ is a valid $\alpha$-LCB, we have that for each $Q \in \mathcal{Q}$,
    \begin{equation*}
        \begin{array}{rcl}
            Q[\mathcal{R}_\psi(Q)] &=& Q[\{\x \in \Omega: \psi(\x) > \pi[Q]\}] \\
            &=& Q[\psi(\X) > \pi[Q]] \\
            &\leq& \alpha
        \end{array}.
    \end{equation*}
\end{proof}

\begin{definition}
\label{def:active}
    Let rejection family $\mathcal{R}: \mathcal{Q} \rightarrow \mathcal{B}^n$ and $\x \in \Omega$ be given. For each $Q \in \mathcal{Q}$, when $\x \not \in \mathcal{R}(Q)$ we say that distribution $Q$ is \emph{active} for $\x$. By $\mathcal{A}(\mathcal{R}, \x)$ we denote the set of distributions active $\x$.  
\end{definition}

\begin{definition}
\label{def:class_bound}
Fix rejection family $\mathcal{R}: \mathcal{Q} \rightarrow \mathcal{B}^n$. The \emph{$\mathcal{Q}$-class bound on $\pi$ with respect to $\mathcal{R}$ given $\x \in \Omega$} is given by
\begin{equation}
\label{eq:candidate_bound}
    M_\pi(\x; \mathcal{Q}, \mathcal{R}) \coloneqq \inf\limits_{Q \in \mathcal{A}(\mathcal{R}, \x)} \pi[Q].
\end{equation}
If $\mathcal{A}(\mathcal{R}, \x)$ is empty, then $M_\pi(\x; \mathcal{Q}, \mathcal{R}) \coloneqq \infty$.
\end{definition}

Going forward, we will assume that the function $\x \mapsto M_\pi(\x; \mathcal{Q}, \mathcal{R})$ is measurable.

\begin{proposition}
\label{prop:consistent_implies_equal}
    For any statistic $\psi: \Omega \rightarrow \bar{\mathbb{R}}$, $\psi(\x) \leq M_\pi(\x; \mathcal{Q}, \mathcal{R}_\psi)$ for each $\x \in \Omega$. 
\end{proposition}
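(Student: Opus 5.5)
The plan is to unfold the definitions of the consistent rejection family and of the active set, and then observe that every distribution active for $\x$ is, by construction, one whose parameter is no smaller than $\psi(\x)$. Once that is established, $\psi(\x)$ is a lower bound for the set $\{\pi[Q] : Q \in \mathcal{A}(\mathcal{R}_\psi, \x)\}$. Since the infimum is the greatest lower bound, the inequality $\psi(\x) \leq M_\pi(\x; \mathcal{Q}, \mathcal{R}_\psi)$ follows. No validity assumption on $\psi$ is needed; the statement holds for an arbitrary statistic.

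Concretely, I would fix $\x \in \Omega$ and split into two cases according to whether $\mathcal{A}(\mathcal{R}_\psi, \x)$ is empty.

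\textbf{Empty case.} By Definition~\ref{def:class_bound}, $M_\pi(\x; \mathcal{Q}, \mathcal{R}_\psi) = \infty$. Since $\psi$ takes values in $\bar{\mathbb{R}}$, the inequality $\psi(\x) \leq \infty$ holds trivially.

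\textbf{Nonempty case.} Take any $Q \in \mathcal{A}(\mathcal{R}_\psi, \x)$. By Definition~\ref{def:active}, $\x \notin \mathcal{R}_\psi(Q)$. By Definition~\ref{def:consistent_map_bound}, $\mathcal{R}_\psi(Q) = \{\y \in \Omega : \psi(\y) > \pi[Q]\}$. Because $\x \in \Omega$, the only way for $\x$ to lie outside this set is $\psi(\x) \not> \pi[Q]$, that is, $\psi(\x) \leq \pi[Q]$, using totality of the order on $\bar{\mathbb{R}}$. Since $Q$ was an arbitrary active distribution, $\psi(\x)$ is a lower bound on $\pi$ over the active set, and hence $\psi(\x) \leq \inf_{Q \in \mathcal{A}(\mathcal{R}_\psi, \x)} \pi[Q]$.

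There is no real obstacle. The only points needing care are bookkeeping in the extended reals and the empty-set convention. The comparisons are well defined even when $\psi(\x)$ or $\pi[Q]$ equals $\pm\infty$, because $\bar{\mathbb{R}}$ is totally ordered and every subset has an infimum. The empty-set convention in Definition~\ref{def:class_bound} matches the standard $\inf \emptyset = \infty$, so the two cases could even be merged.
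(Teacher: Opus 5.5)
Your proof is correct and matches the paper's argument: the empty active set gives $M_\pi(\x; \mathcal{Q}, \mathcal{R}_\psi) = \infty$ trivially, and otherwise each active $Q$ has $\x \notin \mathcal{R}_\psi(Q)$, hence $\psi(\x) \leq \pi[Q]$, and taking the infimum gives the claim. You are also right that no validity assumption on $\psi$ is needed, which agrees with the paper's statement for an arbitrary statistic.
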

\begin{proof}
    Fix $\x \in \Omega$. We proceed under the assumption that $\mathcal{A}(\mathcal{R}_\psi, \x)$ is non-empty because otherwise the conclusion is clearly true. 
    Let $Q \in \mathcal{A}(\mathcal{R}_\psi, \x)$ be arbitrary. Since $Q$ is active for $\x$, we have by Definition~\ref{def:active} that
    \begin{equation*}
        \x \not \in \mathcal{R}_\psi(Q).
    \end{equation*}
    Thus, by Definition~\ref{def:consistent_map_bound},
    \begin{equation*}
        \psi(\x) \leq \pi[Q].
    \end{equation*}
    Since $Q$ was an arbitrary active distribution, we must also have
    \begin{equation*}
        \begin{array}{rcll}
        \psi(\x) &\leq& \inf\limits_{Q \in \mathcal{A}(\mathcal{R}_\psi, \x)} \pi[Q] & \\
        &=& M_\pi(\x; \mathcal{Q}, \mathcal{R}_\psi) & \text{Definition~\ref{def:class_bound}} \\
        \end{array}.
    \end{equation*}
\end{proof}

\begin{proposition}
\label{prop:bound_implies_rank}
    For any $Q \in \mathcal{Q}$ and rejection family $\mathcal{R}: \mathcal{Q} \rightarrow \mathcal{B}^n$,
    \begin{equation*}
        \{\x \in \Omega: \x \not \in \mathcal{R}(Q) \} \subseteq \{\x \in \Omega: M_\pi(\x; \mathcal{Q}, \mathcal{R}) \leq \pi[Q]\}.
    \end{equation*}
\end{proposition}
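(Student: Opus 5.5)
The plan is to argue pointwise, straight from Definitions~\ref{def:active} and~\ref{def:class_bound}. Fix $Q \in \mathcal{Q}$ and the rejection family $\mathcal{R}$, and let $\x \in \Omega$ be any point with $\x \not\in \mathcal{R}(Q)$. It suffices to show that $M_\pi(\x; \mathcal{Q}, \mathcal{R}) \leq \pi[Q]$ for every such $\x$.

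The argument has two steps.
\begin{enumerate}
\item Since $\x \not\in \mathcal{R}(Q)$, Definition~\ref{def:active} says that $Q$ is active for $\x$, so $Q \in \mathcal{A}(\mathcal{R}, \x)$.
\item In particular $\mathcal{A}(\mathcal{R}, \x)$ is non-empty, so the convention $M_\pi = \infty$ for an empty active set never applies. The infimum in~\eqref{eq:candidate_bound} is therefore taken over a set containing $Q$. Because an infimum over a set is at most the value at any of its members, this gives
\begin{equation*}
    M_\pi(\x; \mathcal{Q}, \mathcal{R}) = \inf_{Q' \in \mathcal{A}(\mathcal{R}, \x)} \pi[Q'] \leq \pi[Q].
\end{equation*}
\end{enumerate}
This is exactly membership of $\x$ in the right-hand set, which proves the inclusion.

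There is no real obstacle here; the proof is essentially an unpacking of definitions. The only points needing care are bookkeeping.
\begin{itemize}
\item Step~2 relies on non-emptiness of the active set, so the empty-set convention is irrelevant.
\item The comparison of an infimum with a member's value holds in $\bar{\mathbb{R}}$ even when $\pi[Q] = \pm\infty$, so infinite parameters cause no trouble.
\item No measurability is needed for a set inclusion. The standing measurability assumption on $\x \mapsto M_\pi(\x; \mathcal{Q}, \mathcal{R})$ only matters later, when one applies $Q$ to the right-hand set to conclude $Q[M_\pi(\X;\mathcal{Q},\mathcal{R}) > \pi[Q]] \leq Q[\mathcal{R}(Q)]$.
\end{itemize}
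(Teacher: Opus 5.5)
Your proof is correct and follows the paper's argument: for $\x \notin \mathcal{R}(Q)$, Definition~\ref{def:active} puts $Q$ in $\mathcal{A}(\mathcal{R},\x)$, so the infimum defining $M_\pi(\x;\mathcal{Q},\mathcal{R})$ is at most $\pi[Q]$. Your bookkeeping remarks (the active set is non-empty, the comparison holds in $\bar{\mathbb{R}}$, no measurability is needed for the inclusion) are accurate, though the paper does not spell them out.
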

\begin{proof}
    By Definition~\ref{def:active} we have that $Q \in \mathcal{A}(\mathcal{R}, \x)$ for each $\x \not \in \mathcal{R}(Q)$. Therefore,
    \begin{equation*}
        M_\pi(\x; \mathcal{Q}, \mathcal{R}) = \inf\limits_{G \in \mathcal{A}(\mathcal{R}, \x)} \pi[G] \leq \pi[Q].
    \end{equation*}
    It follows that $\x \in \{\y \in \Omega: M_\pi(\y; \mathcal{Q}, \mathcal{R}) \leq \pi[Q]\}$, and the conclusion follows.
\end{proof}

\begin{theorem}
\label{eq:general_bound}
Let $\alpha \in [0,1]$ be arbitrary and suppose that $\alpha$-level rejection family $\mathcal{R}: \mathcal{Q} \rightarrow \mathcal{B}^n$ is such that $\x \mapsto M_\pi(\x; \mathcal{Q}, \mathcal{R})$ is measurable. Then $\x \mapsto M_\pi(\x; \mathcal{Q}, \mathcal{R})$ is a valid $\alpha$-LCB on $\pi[Q]$ restricted to $\mathcal{Q}$.
\end{theorem}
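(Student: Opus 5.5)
The argument is short and runs through the contrapositive of Proposition~\ref{prop:bound_implies_rank}. Fix an arbitrary $Q \in \mathcal{Q}$. We must show that $Q[M_\pi(\X; \mathcal{Q}, \mathcal{R}) > \pi[Q]] \leq \alpha$. The hypothesis that $\x \mapsto M_\pi(\x; \mathcal{Q}, \mathcal{R})$ is measurable guarantees that the event
\begin{equation*}
    E_Q \coloneqq \{\x \in \Omega : M_\pi(\x; \mathcal{Q}, \mathcal{R}) > \pi[Q]\}
\end{equation*}
lies in $\mathcal{B}^n$, so its $Q$-probability is defined. This is the only role of the measurability assumption. Since $M_\pi$ takes values in $\bar{\mathbb{R}}$, including the value $\infty$ when the active set is empty, it is also a legitimate statistic in the sense of Definition~\ref{def:alpha_bound}.

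The key step is the inclusion $E_Q \subseteq \mathcal{R}(Q)$. Proposition~\ref{prop:bound_implies_rank} states that $\Omega \setminus \mathcal{R}(Q) \subseteq \Omega \setminus E_Q$. Taking complements within $\Omega$ gives $E_Q \subseteq \mathcal{R}(Q) \cap \Omega$, and $\mathcal{R}(Q) \subseteq \Omega$ in any case by Definition~\ref{def:alpha_rej_region}. To see the inclusion directly, suppose $\x \notin \mathcal{R}(Q)$. Then $Q$ is active for $\x$, so the infimum over $\mathcal{A}(\mathcal{R}, \x)$ is at most $\pi[Q]$. This holds even when $\pi[Q] = \pm\infty$, so no separate case for infinite parameters is needed.

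By monotonicity of $Q$ and the fact that $\mathcal{R}$ is an $\alpha$-level rejection family (Definition~\ref{def:rej_map}), we then get $Q[E_Q] \leq Q[\mathcal{R}(Q)] \leq \alpha$. Because $Q$ was arbitrary, Definition~\ref{eq:obliv_alpha_bound} shows that $M_\pi(\cdot; \mathcal{Q}, \mathcal{R})$ is a $\mathcal{Q}$-valid $\alpha$-LCB.

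There is no substantive obstacle. The only point needing care is measurability: $\mathcal{R}(Q)$ is measurable by definition, but $E_Q$ is measurable only because of the stated hypothesis. One should also note that if $\X$ were allowed to fall outside $\Omega$, this would occur with $Q$-probability zero, since $Q[\Omega] = 1$, and so it does not affect the bound.
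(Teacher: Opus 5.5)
Your proof is correct and is essentially the paper's argument. Both proofs use Proposition~\ref{prop:bound_implies_rank} to place $\{M_\pi > \pi[Q]\}$ inside $\mathcal{R}(Q)$ and then apply the $\alpha$-level property; the paper simply phrases this through complements, as $Q[M_\pi(\X; \mathcal{Q}, \mathcal{R}) \leq \pi[Q]] \geq 1 - Q[\mathcal{R}(Q)] \geq 1-\alpha$.
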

\begin{proof}
We must show that, for any $Q \in \mathcal{Q}$, $Q[M_\pi(\X; \mathcal{Q}, \mathcal{R}) > \pi[Q]] \leq \alpha$, or equivalently that $Q[M_\pi(\X; \mathcal{Q}, \mathcal{R}) \leq \pi[Q]] \geq 1-\alpha$. To that end, we have
    \begin{equation}
    \label{eq:alpha_bound}
        \begin{array}{rcll}
            Q[M_\pi(\X; \mathcal{Q}, \mathcal{R}) \leq \pi[Q]] &=& Q[\{\x \in \Omega: M_\pi(\x; \mathcal{Q}, \mathcal{R}) \leq \pi[Q]\}] & \\
            &\geq& Q[\{\x \in \Omega: \x \not \in \mathcal{R}(Q)\}] & \text{by Proposition~\ref{prop:bound_implies_rank}}\\
            &=& Q\left[\X \not \in \mathcal{R}(Q) \right] &  \\
            &=& 1-Q\left[\mathcal{R}(Q) \right] &  \\
            &\geq& 1-\alpha. & \text{by Definition~\ref{def:alpha_rej_region}} \\
        \end{array}
    \end{equation}
\end{proof}

\begin{theorem}
\label{thm:gen_is_opt_alpha}
    For each $\alpha \in [0,1]$ and every valid $\alpha$-LCB, $\psi: \Omega \rightarrow \bar{\mathbb{R}}$, there exists an $\alpha$-level rejection family $\mathcal{R}: \mathcal{Q} \rightarrow \mathcal{B}^n$ such that $\forall \x \in \Omega, \psi(\x) \leq M_\pi(\x; \mathcal{Q}, \mathcal{R})$.
\end{theorem}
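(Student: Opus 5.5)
The natural choice is the rejection family consistent with $\psi$ itself, $\mathcal{R} \coloneqq \mathcal{R}_\psi$ from Definition~\ref{def:consistent_map_bound}. Two facts then have to be checked: that $\mathcal{R}_\psi$ is an $\alpha$-level rejection family, and that $\psi$ is dominated pointwise by the associated class bound. Both follow from results already established, so the argument is just a matter of combining them in the right order.

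First, I will confirm that $\mathcal{R}_\psi$ is a legitimate rejection family, meaning that each $\mathcal{R}_\psi(Q)$ is a Borel subset of $\Omega$. The statistic $\psi$ is measurable as a map into $\bar{\mathbb{R}}$, so $\mathcal{R}_\psi(Q) = \psi^{-1}\big((\pi[Q], \infty]\big)$ is the preimage of a Borel set of $\bar{\mathbb{R}}$. This holds even when $\pi[Q] = \pm\infty$: the set is $\Omega$ minus $\psi^{-1}(\{-\infty\})$ in one case and empty in the other. Hence $\mathcal{R}_\psi(Q) \in \mathcal{B}^n$. Because $\psi$ is a valid $\alpha$-LCB, Proposition~\ref{prop:alpha_bound_implies_alpha_level} then gives $Q[\mathcal{R}_\psi(Q)] \leq \alpha$ for every $Q \in \mathcal{Q}$, so $\mathcal{R}_\psi$ is $\alpha$-level in the sense of Definition~\ref{def:rej_map}.

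Second, I will apply Proposition~\ref{prop:consistent_implies_equal}, which states that $\psi(\x) \leq M_\pi(\x; \mathcal{Q}, \mathcal{R}_\psi)$ for each $\x \in \Omega$. This is exactly the required pointwise domination with $\mathcal{R} = \mathcal{R}_\psi$. It also covers the case where no distribution is active for $\x$, since there $M_\pi(\x;\mathcal{Q},\mathcal{R}_\psi) = \infty$ by Definition~\ref{def:class_bound}.

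The only step that needs any care is the measurability bookkeeping for extended-real values of $\pi[Q]$ in the first step; everything else is a direct citation. The statement does not require $\x \mapsto M_\pi(\x;\mathcal{Q},\mathcal{R}_\psi)$ to be measurable, so that standing assumption plays no role in this proof.
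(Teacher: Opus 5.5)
Your proposal is correct and is the paper's proof: take $\mathcal{R} = \mathcal{R}_\psi$, get the $\alpha$-level property from Proposition~\ref{prop:alpha_bound_implies_alpha_level}, and get the pointwise bound from Proposition~\ref{prop:consistent_implies_equal}. Your explicit check that each $\mathcal{R}_\psi(Q)$ is Borel, including the cases $\pi[Q] = \pm\infty$, is a small detail the paper leaves implicit, and it is handled correctly.
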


\begin{proof}
    Since $\psi$ is a valid $\alpha$-LCB, we know by Proposition~\ref{prop:alpha_bound_implies_alpha_level} that $\mathcal{R}_\psi$ is an $\alpha$-level rejection family. The conclusion then follows directly from Proposition~\ref{prop:consistent_implies_equal}.    
\end{proof}

Theorem~\ref{eq:general_bound} ensures that we can turn any $\alpha$-level rejection family $\mathcal{R}$ into an $\alpha$-LCB $\x \mapsto M_\pi(\x; \mathcal{Q}, \mathcal{R})$ provided that the latter is measurable. 
And Theorem~\ref{thm:gen_is_opt_alpha} proves that every $\alpha$-LCB is itself bounded by $M_\pi(\x; \mathcal{Q}, \mathcal{R})$ for some $\alpha$-level rejection family $\mathcal{R}$.

\section{Conditional Optimality}

Formally, a bound evaluated at every sample in $\Omega$ is a vector, perhaps having uncountable dimension, and as such can be compared to other bounds over the same space. Thus, there exist as many ways to compare bounds as there are ways to compare vectors. In this section we define one in particular: order-conditional optimality; in classical terms, the lower Buehler bound for the given sample order.  

This points to the biggest conceptual difference in moving from the previous section to the present one. In the former, we allowed arbitrary rejection regions for each $Q \in \mathcal{Q}$, while in the latter we will be constrained to a fixed sample order across all distributions in $\mathcal{Q}$. Specifically, in this section a total preorder over samples must first be fixed, any such order is permissible, and samples enter the rejection region for each distribution in this fixed order. We call this a \emph{nested bound} because the set of distributions active for a given sample is always nested with the set active for a sample lower in the total preoder. Somewhat surprisingly, Theorem~\ref{thm:cond_opt} shows that this restriction does not exclude any \emph{good} bounds. In particular, it shows that among all bounds consistent with a given total preorder, the nested bound for that order is dominant. This result was proved previously by Learned-Miller~\cite{learnedmiller2025admissibilityboundsmeandiscrete} in the context of i.i.d. distributions over finite samples spaces.

\begin{definition}
\label{def:consis_preorder}
    We say that statistic $\psi: \Omega \rightarrow \bar{\mathbb{R}}$ is \emph{consistent} with total preorder $D$ if for any $\x, \y \in \Omega$,
    \begin{equation*}
        \x \lesssim_D \y \Rightarrow \psi(\x) \leq \psi(\y).
    \end{equation*}
    If more strongly
    \begin{equation*}
        \x \lesssim_D \y \Leftrightarrow \psi(\x) \leq \psi(\y),
    \end{equation*}
    then we say that $\psi$ \emph{induces} $D$.
\end{definition}

The next two definitions are direct analogs of those given by Learned-Miller~\cite{learnedmiller2025admissibilityboundsmeandiscrete}.

\begin{definition}
\label{def:dominates}
    For statistics $\psi, \psi': \Omega \rightarrow \bar{\mathbb{R}}$ we say that $\psi'$ \emph{dominates} $\psi$, denoted $\psi < \psi'$, if $\forall \x \in \Omega$, $\psi(\x) \leq \psi'(\x)$ and $\exists \x \in \Omega$ such that $\psi(\x) < \psi'(\x)$.
\end{definition}

\begin{definition}
\label{def:cond_opt}
    Fix $\alpha \in [0, 1]$ and a class of statistics $\mathcal{S} \subseteq \{\psi: \Omega \rightarrow \bar{\mathbb{R}}\}$, each a $\mathcal{Q}$-valid $\alpha$-LCB. We say that $\psi \in \mathcal{S}$ is \emph{conditionally optimal} for $\mathcal{S}$ if it dominates every other statistic in $\mathcal{S}$. When we say statistic $\psi$, consistent with total preorder $D$, is conditionally optimal without specifying the class, we mean implicitly that it is conditionally optimal with respect to the set of all $\mathcal{Q}$-valid $\alpha$-LCBs consistent with $D$. 
\end{definition}

Throughout the remainder of this document, we assume that set representation of $D$ is Borel measurable in $\Omega \times \Omega$, i.e.
\begin{equation*}
    \{(\x, \y) \in \Omega \times \Omega: \x \lesssim_D \y \} \in \mathcal{B}^n \otimes \mathcal{B}^n. 
\end{equation*}

\begin{definition}
\label{def:pre_upper}
    The \emph{upper set} associated with sample $\bm x \in \Omega$ and total preorder $D$ is given by 
    \begin{equation*}
        \Omega(\bm x, D) \coloneqq \{\y \in \Omega: \bm x \lesssim_D \bm y\}.
    \end{equation*}
\end{definition}

Going forward, we assume that all the upper sets we work with are measurable.

\begin{definition}
\label{def:nested_order_bound}
    Let $\alpha \in [0,1]$, $\x \in \Omega$, and $D$, a total preorder over samples $\Omega$, be given. The \emph{nested $\mathcal{Q}$-class bound on $\pi$ at level $\alpha$ for $\x$} is given by 
    \begin{equation*}
        \mathcal{N}^\alpha_\pi(\x; \mathcal{Q}, D) \coloneqq \inf_{Q \in \mathcal{Q}: Q[\Omega(\x, D)] > \alpha} \pi[Q],
    \end{equation*}
    If the set $\{Q \in \mathcal{Q}: Q[\Omega(\x, D)] > \alpha\}$ is empty, then $\mathcal{N}^\alpha_\pi(\x; \mathcal{Q}, D) \coloneqq \infty$.
\end{definition}  

\begin{proposition}
\label{prop:nested_consis_with_D}
    Fix $\alpha \in [0,1]$ and total preorder $D$ over samples $\Omega$. If $\x, \y \in \Omega$ are such that $\x \lesssim_D \y$, then
    \begin{equation*}
        \mathcal{N}^\alpha_\pi(\x; \mathcal{Q}, D) \leq \mathcal{N}^\alpha_\pi(\y; \mathcal{Q}, D),
    \end{equation*}
    making the statistic $\x \mapsto \mathcal{N}^\alpha_\pi(\x; \mathcal{Q}, D)$ consistent with $D$.
\end{proposition}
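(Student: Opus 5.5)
The argument will be a direct set-inclusion followed by monotonicity of the infimum over nested index sets.

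The first step is to show that the upper sets are nested: if $\x \lesssim_D \y$, then $\Omega(\y, D) \subseteq \Omega(\x, D)$. Take any $\z \in \Omega(\y, D)$, so that $\y \lesssim_D \z$. Transitivity in Definition~\ref{def:total_preorder} together with $\x \lesssim_D \y$ gives $\x \lesssim_D \z$, that is, $\z \in \Omega(\x, D)$.

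The second step uses monotonicity of measure. For every $Q \in \mathcal{Q}$ we get $Q[\Omega(\y, D)] \leq Q[\Omega(\x, D)]$, since both sets are measurable by the standing assumption. Hence
\begin{equation*}
    \{Q \in \mathcal{Q}: Q[\Omega(\y, D)] > \alpha\} \subseteq \{Q \in \mathcal{Q}: Q[\Omega(\x, D)] > \alpha\}.
\end{equation*}

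The third step compares the two infima in Definition~\ref{def:nested_order_bound}. An infimum over a smaller index set is at least as large as one over a larger index set, so $\mathcal{N}^\alpha_\pi(\x; \mathcal{Q}, D) \leq \mathcal{N}^\alpha_\pi(\y; \mathcal{Q}, D)$. The empty-set convention has to be handled explicitly. If the index set for $\y$ is empty, its bound is $\infty$ and the inequality holds trivially. If the index set for $\x$ is empty, then so is the one for $\y$ by the inclusion, and both bounds equal $\infty$. In every other case the ordinary infimum comparison in $\bar{\mathbb{R}}$ applies.

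Consistency with $D$ in the sense of Definition~\ref{def:consis_preorder} is then exactly the implication just proved. None of these steps is a real obstacle; the only point needing care is the empty-set convention, and the case split above covers it.
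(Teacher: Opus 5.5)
Your proposal is correct and follows essentially the same route as the paper: the upper sets are nested, this gives the index-set inclusion via monotonicity of $Q$, and the infimum over the smaller set is at least as large. Your explicit use of transitivity for the nesting, and your case split for the empty-set convention, spell out two details that the paper's proof leaves implicit.
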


\begin{proof}
    Since $\x \lesssim_D \y$, we know by Definition~\ref{def:pre_upper} that $\Omega(\y, D) \subseteq \Omega(\x, D)$. It follows then that for each $Q \in \mathcal{Q}$, $Q[\Omega(\y, D)] > \alpha \Longrightarrow Q[\Omega(\x, D)] > \alpha$, which in turn implies that
    \begin{equation*}
        \{Q \in \mathcal{Q}: Q[\Omega(\y, D)] > \alpha\} \subseteq \{Q \in \mathcal{Q}: Q[\Omega(\x, D)] > \alpha\}.
    \end{equation*}
    Thus, the conclusion follows by Definition~\ref{def:nested_order_bound} and the fact that the infimum over a set cannot exceed the infimum of a subset of that set.
\end{proof}

\begin{definition}
\label{def:nested_rej_map}
    Given arbitrary $\alpha \in [0,1]$ and total preorder $D$, the \emph{nested rejection family $\mathcal{R}_D: \mathcal{Q} \rightarrow \mathcal{B}^n$ with respect to $D$} defines for each $Q \in \mathcal{Q}$
    \begin{equation}
    \label{eq:lm_reject}
        \mathcal{R}_D^\alpha(Q) \coloneqq \{\x \in \Omega: Q[\Omega(\x, D)] \leq \alpha\}.
    \end{equation}    
\end{definition}

\begin{proposition}
\label{prop:nest_rej_map_alpha}
    For any $\alpha \in [0,1]$ and total preorder $D$ over samples $\Omega$, $\mathcal{R}^\alpha_D$ is an $\alpha$-level rejection family.  
\end{proposition}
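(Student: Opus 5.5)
The plan is to show directly that $Q[\mathcal{R}^\alpha_D(Q)] \leq \alpha$ for each fixed $Q \in \mathcal{Q}$. Write $F(\x) \coloneqq Q[\Omega(\x, D)]$, so that $R \coloneqq \mathcal{R}^\alpha_D(Q) = \{\x \in \Omega : F(\x) \leq \alpha\}$.

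\emph{Measurability.} Under the standing assumption that $\{(\x,\y): \x \lesssim_D \y\} \in \mathcal{B}^n \otimes \mathcal{B}^n$, Fubini/Tonelli shows that $F$ is Borel. Hence $R$ is measurable.

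\emph{$R$ is an upper set.} If $\x \in R$ and $\x \lesssim_D \y$, then $\Omega(\y, D) \subseteq \Omega(\x, D)$, as in the proof of Proposition~\ref{prop:nested_consis_with_D}. So $F(\y) \leq F(\x) \leq \alpha$, and therefore $\y \in R$. Also, $F$ is $D$-antitone.

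\emph{Approximating sequence.} If $R = \emptyset$ we are done. Otherwise put $s \coloneqq \sup_{\x \in R} F(\x) \leq \alpha$ and choose $\x_k \in R$ with $F(\x_k) \to s$. Replacing $\x_k$ by the $D$-minimum of $\x_1, \ldots, \x_k$ keeps it in $R$ and does not decrease $F$. So we may assume $\x_1 \gtrsim_D \x_2 \gtrsim_D \cdots$. The sets $\Omega(\x_k, D)$ then increase, and $U \coloneqq \bigcup_k \Omega(\x_k, D) \subseteq R$ satisfies $Q[U] = \lim_k F(\x_k) = s \leq \alpha$ by continuity from below.

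\emph{The remainder.} It remains to show that $V \coloneqq R \setminus U$ is $Q$-null. By totality, each $\y \in V$ satisfies $\y <_D \x_k$ for all $k$, so $\Omega(\y, D) \supseteq U$. Since $F(\y) \leq s = Q[U]$, this gives $Q[\Omega(\y, D) \setminus U] = 0$. Because $V \cap U = \emptyset$, it follows that $Q[V \cap \Omega(\y, D)] = 0$ for every $\y \in V$.

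Now apply Tonelli to the measurable set $P \coloneqq \{(\y,\z) \in V \times V : \y \lesssim_D \z\}$:
\begin{equation*}
(Q \otimes Q)[P] = \int_V Q[V \cap \Omega(\y, D)]\, dQ(\y) = 0.
\end{equation*}
By totality of $D$, $V \times V$ is the union of $P$ and its reflection $\{(\y,\z) : \z \lesssim_D \y\}$. The reflection also has product measure $0$ by symmetry of $Q \otimes Q$. Hence $Q[V]^2 = 0$, and so $Q[R] = Q[U] + Q[V] = s \leq \alpha$.

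\emph{Main obstacle.} The hard point is that $R$ need not have a $D$-minimal element. If it did, say $\x_0$, then $R = \Omega(\x_0, D)$ and the bound would be immediate. The Fubini step is what handles the missing minimum. It works only because the mass of $V$ above each of its own points is zero; applied to $R$ directly, the same argument would only give $Q[R] \leq 2\alpha$. This is why the countable exhaustion by $U$ is done first. The remaining checks are that $V$ and $P$ are measurable, which follows from the assumed measurability of $D$ and of the upper sets.
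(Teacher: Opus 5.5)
Your proof is correct, and it takes a genuinely different route from the paper's. The paper draws $m$ independent copies $\Z_1,\ldots,\Z_m \sim Q$. It observes that if all of them land in $R_Q$, then a $D$-minimal one has all the others in its upper set. A union bound then gives $Q[R_Q]^m \leq m\,\alpha^{m-1} Q[R_Q]$, i.e.\ $Q[R_Q] \leq \alpha\, m^{1/(m-1)}$, and the paper lets $m \to \infty$.

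Your remark that a direct two-copy Fubini argument yields only $2\alpha$ is exactly the $m=2$ case of that bound. The paper removes the factor $2$ by amplifying over $m$, a tensor-power trick. You remove it differently: you first peel off the part of $R_Q$ exhausted by a countable $D$-decreasing chain. The remainder $V$ then has zero self-mass above each of its points, so the $m=2$ argument gives $Q[V]=0$ exactly.

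The paper's route is shorter and needs no supremum, no chain, and no upper-set structure of $R_Q$. Yours is non-asymptotic and uses only $Q\otimes Q$. It also makes the measurability explicit; the paper's events $E_i$ and its conditioning step tacitly rely on the same product-measurability of the graph of $D$ that you invoke.

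Your argument also gives structural information for free. $R_Q$ agrees $Q$-a.s.\ with a countable increasing union of principal upper sets, and $Q[R_Q] = \sup_{\x \in R_Q} Q[\Omega(\x, D)]$ exactly. The paper's argument would also give the upper bound by this supremum if that supremum replaced $\alpha$ in the estimate of $\mathbb{P}[E_i]$.
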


\begin{proof}
    Fix arbitrary $Q \in \mathcal{Q}$ and let $R_Q \coloneqq \mathcal{R}^\alpha_D(Q)$. By Definition~\ref{def:alpha_rej_region} it will suffice to show that $Q[R_Q] \leq \alpha$, and we will do so by arguing about the limiting behavior of a finite random sample from $\Omega$ according to the law $Q$. If $Q[R_Q] = 0$, then clearly the result follows for all $\alpha \in [0,1]$. Thus, we proceed under the assumption that $Q[R_Q] > 0$. 
    
    For any integer $m \geq 2$, let $\Z_1, \ldots, \Z_m \overset{\text{ind}}{\sim} Q$ be independent copies of the full observation vector, and let $\mathbb{P}$ be their joint probability law. Define the event
    \begin{equation*}
        A \coloneqq \{\Z_1, \ldots, \Z_m \in R_Q\},
    \end{equation*}
    and, for each $i \in [m]$, the events
    \begin{equation*}
        E_i \coloneqq \{\Z_i \in R_Q \text{~and~} \Z_j \in \Omega(\Z_i, D) \text{~for each~} j \neq i \}.
    \end{equation*}
    Note that by independence of the $Z_i$, 
    \begin{equation}
    \label{eq:prob_A}
        \mathbb{P}[A] = (Q[R_Q])^m,
    \end{equation}
    and
    \begin{equation}
    \label{eq:prob_Ei}
        \mathbb{P}[E_i] \leq \alpha^{m-1} Q[R_Q].
    \end{equation}
    We can see that the latter is true as follows. The joint event defined by $E_i$ decomposes into the probability that $\Z_i \in R_Q$ multiplied by the conditional probability that $\Z_j \in \Omega(\Z_i, D)$, $j \neq i$. Conditional on $\Z_i$ taking on any value $\z \in R_Q$, the other $m-1$ vectors $\Z_j$ remain independent and distributed according to $Q$. Therefore, each belong to $\Omega(\z, D)$ with probability $Q[\Omega(\z, D)]$, which is no greater than $\alpha$ according to Definition~\ref{def:nested_rej_map}. So the conditional probability that they all belong to $\Omega(\z, D)$ is bounded from above by $\alpha^{m-1}$.
    
    Since $D$ is a total preorder, at least one of the $\Z_i$ must be such that $\forall j \in [m], \Z_i \lesssim_D \Z_j$. Suppose that $\Z_j \in R_Q$, for each $j \in [m]$, i.e. event $A$ has occurred. Then there must be some $\Z_i$ among the $m$ such that $\forall j \in [m], \Z_i \in R_Q \text{~and~} \Z_i \lesssim_D \Z_j$. By Definition~\ref{def:pre_upper} this implies that $\Z_j \in \Omega(\Z_i, D)$ for each $\Z_j$, i.e. one of the events $E_i$ has occurred. Therefore, we must have $A \subseteq \bigcup_{i=1}^m E_i$. From the monotonicity of measure we have from Equation~\ref{eq:prob_A} and applying a union bound to Inequality~\ref{eq:prob_Ei} that
    \begin{equation*}
        (Q[R_Q])^m \leq m \alpha^{m-1} Q[R_Q],
    \end{equation*}
    so that
    \begin{equation*}
        Q[R_Q] \leq \alpha m^\frac{1}{m-1}.
    \end{equation*}
    The conclusion follows by allowing $m \rightarrow \infty$. 
\end{proof}

\begin{theorem}
\label{thm:nested_bound_is_alpha}
    For any $\alpha \in [0,1]$ and total preorder $D$ over samples $\Omega$ such that $\x \mapsto \mathcal{N}^\alpha_\pi(\x; \mathcal{Q}, D)$ is measurable, $\x \mapsto \mathcal{N}^\alpha_\pi(\x; \mathcal{Q}, D)$ is a valid $\alpha$-LCB.
\end{theorem}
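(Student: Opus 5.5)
The plan is to show that the nested bound coincides with the class bound of the nested rejection family, and then to appeal to the general validity theorem. Concretely, I will show that for every $\x \in \Omega$,
\begin{equation*}
    \mathcal{N}^\alpha_\pi(\x; \mathcal{Q}, D) = M_\pi(\x; \mathcal{Q}, \mathcal{R}^\alpha_D).
\end{equation*}
Once this identity is in hand, measurability of $\x \mapsto M_\pi(\x; \mathcal{Q}, \mathcal{R}^\alpha_D)$ follows from the assumed measurability of the nested bound. Proposition~\ref{prop:nest_rej_map_alpha} tells us that $\mathcal{R}^\alpha_D$ is an $\alpha$-level rejection family. Theorem~\ref{eq:general_bound} then yields directly that the statistic is a valid $\alpha$-LCB on $\pi[Q]$ for every $Q \in \mathcal{Q}$.

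The identity is a matter of unwinding definitions. Fix $\x \in \Omega$. By Definition~\ref{def:active}, $Q \in \mathcal{A}(\mathcal{R}^\alpha_D, \x)$ exactly when $\x \notin \mathcal{R}^\alpha_D(Q)$. By Definition~\ref{def:nested_rej_map}, this happens exactly when it is not the case that $Q[\Omega(\x, D)] \leq \alpha$, that is, when $Q[\Omega(\x, D)] > \alpha$. Hence
\begin{equation*}
    \mathcal{A}(\mathcal{R}^\alpha_D, \x) = \{Q \in \mathcal{Q} : Q[\Omega(\x, D)] > \alpha\}.
\end{equation*}
The infima in Definition~\ref{def:class_bound} and Definition~\ref{def:nested_order_bound} are therefore taken over the same set of the same quantity $\pi[Q]$. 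The empty-set conventions also agree, since both assign the value $\infty$. This gives the identity.

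There is no substantive obstacle here, since the real work (showing that $\mathcal{R}^\alpha_D$ is $\alpha$-level) was done in Proposition~\ref{prop:nest_rej_map_alpha}. The only points needing care are bookkeeping. First, $\mathcal{R}^\alpha_D(Q)$ must be a Borel set, so that $\mathcal{R}^\alpha_D$ is a legitimate rejection family mapping into $\mathcal{B}^n$. This follows from the standing assumption that $D$ is measurable in $\Omega \times \Omega$: by a Fubini/section argument, $\x \mapsto Q[\Omega(\x, D)]$ is measurable, so its sublevel set $\{\x : Q[\Omega(\x,D)] \leq \alpha\}$ is Borel. Second, the measurability hypothesis on $M_\pi$ required by Theorem~\ref{eq:general_bound} is transferred through the identity from the hypothesis stated in the theorem.
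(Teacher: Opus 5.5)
Your proposal is correct and matches the paper's proof. The paper likewise identifies $\mathcal{N}^\alpha_\pi(\x;\mathcal{Q},D)$ with $M_\pi(\x;\mathcal{Q},\mathcal{R}^\alpha_D)$ by unwinding the definitions of the active set and the nested rejection family, then invokes Proposition~\ref{prop:nest_rej_map_alpha} and Theorem~\ref{eq:general_bound}. Your added remark is also correct and is something the paper leaves implicit: each $\mathcal{R}^\alpha_D(Q)$ is Borel because $\x \mapsto Q[\Omega(\x,D)]$ is measurable, which follows from the product-measurability of the graph of $D$.
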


\begin{proof}
    Applying Definitions~\ref{eq:candidate_bound},~\ref{def:nested_order_bound}, and~\ref{def:nested_rej_map} we have
    \begin{equation}
    \label{eq:N_M_equiv}
        \begin{array}{rcl}
            \mathcal{N}^\alpha_\pi(\x; \mathcal{Q}, D) &=& \inf\{\pi[Q]: Q \in \mathcal{Q}, Q[\Omega(\x, D)] > \alpha\} \\
            &=& \inf\{\pi[Q]: Q \in \mathcal{A}(\mathcal{R}_D^\alpha, \x)\} \\
            &=& M_\pi(\x; \mathcal{Q}, \mathcal{R}_D^\alpha)
    \end{array}.
    \end{equation}
    The conclusion follows by noting that $\mathcal{R}_D^\alpha$ is $\alpha$-level by Proposition~\ref{prop:nest_rej_map_alpha} and applying Theorem~\ref{eq:general_bound}.
\end{proof}

\begin{proposition}
    \label{prop:psi_leq_N}
    Fix $\alpha \in (0, 1)$. If $\psi: \Omega \rightarrow \bar{\mathbb{R}}$ is a $\mathcal{Q}$-valid $\alpha$-LCB consistent with total preorder $D$, then $\forall \x \in \Omega$
    \begin{equation*}
        \psi(\x) \leq \mathcal{N}^\alpha_\pi(\x; \mathcal{Q}, D),
    \end{equation*}
    regardless of the measurability of $\x \mapsto \mathcal{N}^\alpha_\pi(\x; \mathcal{Q}, D)$.
\end{proposition}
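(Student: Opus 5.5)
We argue directly from the definitions, fixing $\x \in \Omega$ and an arbitrary $Q \in \mathcal{Q}$ with $Q[\Omega(\x, D)] > \alpha$. By Definition~\ref{def:nested_order_bound}, $\mathcal{N}^\alpha_\pi(\x; \mathcal{Q}, D)$ is the infimum of $\pi[Q]$ over such $Q$, and it equals $\infty$ when no such $Q$ exists, in which case there is nothing to prove. It therefore suffices to show $\psi(\x) \leq \pi[Q]$ for every such $Q$. Taking the infimum over all such $Q$ then yields the claim. Since only pointwise inequalities and an infimum are involved, measurability of $\x \mapsto \mathcal{N}^\alpha_\pi(\x; \mathcal{Q}, D)$ never enters; only the measurability of $\psi$ and of the upper sets (both assumed) is needed.

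The key step is a contradiction argument. Suppose $\psi(\x) > \pi[Q]$. Because $\psi$ is consistent with $D$ (Definition~\ref{def:consis_preorder}), every $\y \in \Omega(\x, D)$ satisfies $\x \lesssim_D \y$, and hence $\psi(\y) \geq \psi(\x) > \pi[Q]$. Thus
\begin{equation*}
    \Omega(\x, D) \subseteq \{\y \in \Omega : \psi(\y) > \pi[Q]\} = \mathcal{R}_\psi(Q).
\end{equation*}
Monotonicity of measure then gives $Q[\mathcal{R}_\psi(Q)] \geq Q[\Omega(\x, D)] > \alpha$. This contradicts Proposition~\ref{prop:alpha_bound_implies_alpha_level}, which says $Q[\mathcal{R}_\psi(Q)] \leq \alpha$ because $\psi$ is a valid $\alpha$-LCB. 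Hence $\psi(\x) \leq \pi[Q]$.

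The only point needing care is bookkeeping in $\bar{\mathbb{R}}$. If $\pi[Q] = \infty$, the inequality $\psi(\x) \leq \pi[Q]$ is trivial. If $\pi[Q] = -\infty$ and $\psi(\x) > -\infty$, the same chain of inclusions applies without change. The restriction $\alpha \in (0,1)$ plays no substantive role beyond matching the standing hypotheses: for $\alpha = 1$ the defining set of $Q$ is empty, and the argument above is uniform in $\alpha$ anyway. I do not expect any real obstacle; the main thing is to make clear that the upper-set inclusion comes from consistency with $D$ and not from $\psi$ inducing $D$.
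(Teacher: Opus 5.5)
Your proposal is correct and follows essentially the same argument as the paper. You dispose of the empty case, then for each $Q$ with $Q[\Omega(\x,D)] > \alpha$ use consistency with $D$ to get $\Omega(\x,D) \subseteq \{\y \in \Omega : \psi(\y) > \pi[Q]\}$, contradicting validity, and then take the infimum; the only cosmetic differences are that you phrase the contradiction through Proposition~\ref{prop:alpha_bound_implies_alpha_level} rather than directly through Definition~\ref{def:alpha_bound}, and you add correct bookkeeping for extended-real values of $\pi[Q]$ and for the edge values of $\alpha$.
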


\begin{proof}
    Define $\mathcal{Q}_\x \coloneqq \{Q \in \mathcal{Q}: Q[\Omega(\x, D)] > \alpha \}$. If $\mathcal{Q}_\x = \emptyset$, then $\mathcal{N}^\alpha_\pi(\x; \mathcal{Q}, D) = \infty$, and the conclusion follows immediately.

    Now assuming that $\mathcal{Q}_\x \neq \emptyset$, let $Q \in \mathcal{Q}_\x$ be arbitrary. If $\psi(\x) > \pi[Q]$, then because $\psi$ is assumed consistent with $D$ we must have that $\psi(\y) > \pi[Q]$ for all $\y \in \Omega(\x, D)$. But this would imply that
    $Q[\psi(\X) > \pi[Q]] > \alpha$, contradicting the assumption that $\psi$ $\mathcal{Q}$-valid $\alpha$-LCB by Definition~\ref{def:alpha_bound}. Therefore, $\forall Q \in \mathcal{Q}_\x$, $\psi(\x) \leq \pi[Q]$. It follows then that
    \begin{equation}
        \psi(\x) \leq \inf_{Q \in \mathcal{Q}_\x} \pi[Q],
    \end{equation}
    so that $\psi(\x) \leq \mathcal{N}^\alpha_\pi(\x; \mathcal{Q}, D)$ by Definition~\ref{def:nested_order_bound}.
\end{proof}

\begin{theorem}
\label{thm:cond_opt}
    For every $\alpha \in (0, 1)$, if $\x \mapsto \mathcal{N}^\alpha_\pi(\x; \mathcal{Q}, D)$ is measurable, then $\x \mapsto \mathcal{N}^\alpha_\pi(\x; \mathcal{Q}, D)$ is conditionally optimal.
\end{theorem}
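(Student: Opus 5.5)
The plan is to verify membership in the class and then domination, by combining three results stated above. Fix $\alpha \in (0,1)$ and a total preorder $D$, and let $\mathcal{S}_D$ denote the set of all $\mathcal{Q}$-valid $\alpha$-LCBs consistent with $D$. By Definition~\ref{def:cond_opt}, I must show two things. First, the statistic $\x \mapsto \mathcal{N}^\alpha_\pi(\x; \mathcal{Q}, D)$ belongs to $\mathcal{S}_D$. Second, it lies pointwise above every other member of $\mathcal{S}_D$.

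For membership, the measurability hypothesis lets me invoke Theorem~\ref{thm:nested_bound_is_alpha}. That theorem says $\mathcal{N}^\alpha_\pi(\cdot; \mathcal{Q}, D)$ is a $\mathcal{Q}$-valid $\alpha$-LCB. Consistency with $D$ is exactly Proposition~\ref{prop:nested_consis_with_D}. So the nested bound belongs to $\mathcal{S}_D$.

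For domination, take an arbitrary $\psi \in \mathcal{S}_D$. Proposition~\ref{prop:psi_leq_N} gives $\psi(\x) \le \mathcal{N}^\alpha_\pi(\x; \mathcal{Q}, D)$ for every $\x \in \Omega$. This is the place where the restriction $\alpha \in (0,1)$ of the theorem is used.

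The only real obstacle is how to read ``dominates'' in Definition~\ref{def:cond_opt}. Definition~\ref{def:dominates} requires a strict inequality at some point. A $\psi \in \mathcal{S}_D$ that differs from $\mathcal{N}^\alpha_\pi$ satisfies $\psi \le \mathcal{N}^\alpha_\pi$ everywhere and cannot coincide with it everywhere. Hence there is some $\x$ with $\psi(\x) < \mathcal{N}^\alpha_\pi(\x; \mathcal{Q}, D)$, and therefore $\psi < \mathcal{N}^\alpha_\pi$ in the sense of Definition~\ref{def:dominates}. I will interpret ``every other statistic'' as every statistic in $\mathcal{S}_D$ not identically equal to $\mathcal{N}^\alpha_\pi$ as a function on $\Omega$. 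Under that reading the pointwise inequality yields strict domination, and the proof is complete.
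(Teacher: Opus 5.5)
Your proposal is correct and matches the paper's proof. You get membership in the class from Theorem~\ref{thm:nested_bound_is_alpha} and Proposition~\ref{prop:nested_consis_with_D}, and pointwise domination from Proposition~\ref{prop:psi_leq_N}; your added remark that any \emph{other} member of the class must be strictly smaller somewhere, so that the strict part of Definition~\ref{def:dominates} holds, fills in a step the paper leaves implicit.
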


\begin{proof}
    By Theorem~\ref{thm:nested_bound_is_alpha}, the measurable $\x \mapsto \mathcal{N}^\alpha_\pi(\x; \mathcal{Q}, D)$ is a $\mathcal{Q}$-valid $\alpha$-LCB. And by Proposition~\ref{prop:nested_consis_with_D}, it is also consistent with $D$. Finally, Proposition~\ref{prop:psi_leq_N} shows that it dominates every other $\mathcal{Q}$-valid $\alpha$-LCB consistent with $D$. Conditional optimality follows then by Definition~\ref{def:cond_opt}.
\end{proof}

Theorem~\ref{thm:cond_opt} proves that no $\alpha$-LCB consistent with total preorder $D$ can improve on the $\alpha$-LCB $\x \mapsto \mathcal{N}^\alpha_\pi(\x; \mathcal{Q}, D)$. Thus, to find the optimal $\alpha$-LCB consistent with order $D$, we need only optimize over rejection regions of the form given by Equation~\ref{eq:lm_reject}.

\section{Two-point Marginals for the Maximum Marginal Mean}
\label{sec:two_point}

Let
\begin{equation*}
    \mathcal{Q}_\text{ind}
    \coloneqq
    \{F_1 \otimes \cdots \otimes F_n:
      F_i \in \mathcal{P}(\mathbb{R}_+),\ i \in [n]\}.   
\end{equation*}
Thus under $\mathcal{Q}_\text{ind}$ the components $X_1, \ldots, X_n$ of random vector $\X$ are independent and non-negative, but not necessarily identically distributed. Throughout this section we assume that $\mathcal{Q}_\text{ind} \subseteq \mathcal{Q}$. In particular, $\mathcal{Q}$ contains every i.i.d. law of the form
\begin{equation*}
    ((1-p)\delta_0 + p\delta_s)^{\otimes n},
    \qquad p \in [0,1],\quad s \in \mathbb R_+.
\end{equation*}
Because $\delta_{x_1}\otimes\cdots\otimes\delta_{x_n}
    \in\mathcal Q_{\mathrm{ind}}\subseteq\mathcal Q$
for every $\x\in\mathbb R_+^n$, the closed model support in this section is $\Omega=\mathbb R_+^n$.

For $Q \in \mathcal{Q}$, take $\X = (X_1, \ldots, X_n) \sim Q$ and define $\mu_i[Q] \coloneqq E_Q[X_i]$ and $\mu_\text{max}[Q] \coloneqq \max_{i \in [n]} \mu_i[Q]$, each taking values in $\bar{\mathbb{R}}_+$. Going forward, we restrict our attention to this maximum marginal mean parameter, i.e. $\pi[Q] \coloneqq \mu_\text{max}[Q]$. The maximum marginal mean captures the common-mean model, in which $\mu_1[Q] = \cdots = \mu_n[Q]$, and the i.i.d. mean as special cases. 

\begin{definition}
\label{def:family_approx}
    Let $\psi: \Omega \rightarrow \bar{\mathbb{R}}_+$ be an arbitrary $\mathcal{Q}$-valid $\alpha$-LCB inducing total preorder $D$. For any sample $\x \in \Omega$, where $\psi(\x) < \infty$, we say that $\psi(\x)$ is \emph{approximated from above}  by a family of laws $Q_\eta \in \mathcal{Q}$ if for each $\eta > 0$, $Q_\eta(\Omega(\x, D)) > \alpha$ and $\psi(\x) \leq \pi[Q_\eta] < \psi(\x) + \eta$.
\end{definition}

Whenever $\psi$ induces total preorder $D$ and $\forall \x \in \Omega$,
\begin{equation*}
    \mathcal{N}^\alpha_\pi(\x;\mathcal{Q},D) = \psi(\x),
\end{equation*}
Definition~\ref{def:family_approx} states that a family of distributions approximates a bound at a given sample if the infimum in Definition~\ref{def:nested_order_bound} can be approached within the smaller class of laws from this family. 

\begin{proposition}
\label{prop:finite_above}
    Fix $\alpha \in [0,1)$ and suppose that $\mathcal{Q}_\text{ind} \subseteq \mathcal{Q}$. If $\psi:\Omega \rightarrow \bar{\mathbb{R}}$ is a $\mathcal{Q}$-valid $\alpha$-LCB on $\mu_\text{max}$, then $\psi(\x)\leq x_{(n)}$ for every $\x\in\Omega$. In particular, if $\psi:\Omega \rightarrow \bar{\mathbb{R}}_+$, then $\psi$ is finite-valued.
\end{proposition}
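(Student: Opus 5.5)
Here $x_{(n)}$ is the maximum coordinate of $\x$, so the plan is to test validity of $\psi$ against the point-mass law located at the observed sample. Fix $\x \in \Omega = \mathbb{R}_+^n$ and take
\begin{equation*}
    Q_\x \coloneqq \delta_{x_1} \otimes \cdots \otimes \delta_{x_n},
\end{equation*}
which lies in $\mathcal{Q}_\text{ind} \subseteq \mathcal{Q}$. Under $Q_\x$ we have $\X = \x$ almost surely, so $\mu_i[Q_\x] = x_i$ for each $i$ and $\mu_\text{max}[Q_\x] = \max_i x_i = x_{(n)}$, which is finite.

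Next I will apply the validity of $\psi$ at $Q_\x$ (Definitions~\ref{def:alpha_bound} and~\ref{eq:obliv_alpha_bound}), which gives
\begin{equation*}
    Q_\x[\psi(\X) > x_{(n)}] \leq \alpha .
\end{equation*}
Because $Q_\x$ is the point mass at $\x$, the left-hand side is $\mathbf{1}\{\psi(\x) > x_{(n)}\}$, and so it equals either $0$ or $1$. Since $\alpha < 1$, the value $1$ is excluded, and therefore $\psi(\x) \leq x_{(n)}$. As $\x$ was arbitrary, this proves the first claim.

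For the second claim, suppose $\psi$ takes values in $\bar{\mathbb{R}}_+$. Then $0 \leq \psi(\x) \leq x_{(n)} < \infty$ for every $\x$, so $\psi$ is finite-valued.

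I do not expect a real obstacle here. The one point to check is that the event $\{\psi(\X) > x_{(n)}\}$ is measurable, and this holds because $\psi$ is a measurable function.
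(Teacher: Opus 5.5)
Your proposal is correct and follows the paper's argument: test validity against the point-mass product law $\delta_{x_1}\otimes\cdots\otimes\delta_{x_n} \in \mathcal{Q}_\text{ind}$, whose maximum marginal mean is $x_{(n)}$. You are slightly more explicit than the paper in two places: you note that the miscoverage probability is a $0$--$1$ indicator, so $\alpha<1$ forces $\psi(\x)\leq x_{(n)}$, and you verify the finite-valuedness claim.
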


\begin{proof}
    For each $\x \in \Omega$, the class $\mathcal{Q}_\text{ind}$ contains the distribution $Q' \coloneqq \delta_{x_1} \otimes \cdots \otimes \delta_{x_n}$. Noting that $\mu_\text{max}[Q'] = x_{(n)}$, where $x_{(n)}$ is the largest component of $\x$, Definition~\ref{eq:obliv_alpha_bound} requires that $Q'[\psi(\X) > x_{(n)}] \leq \alpha$. Since $Q' \in \mathcal{Q}_\text{ind} \subseteq \mathcal{Q}$ and $\psi$ is $\mathcal{Q}$-valid, then $\psi(\x)$ cannot exceed $x_{(n)}$. But the choice of $\x$ was arbitrary, so $\forall \x \in \Omega, \psi(\x) \leq x_{(n)}$.
\end{proof}

\begin{theorem}
\label{thm:homo_forces_cop}
    Suppose that $\alpha \in (0, 1)$, $\mathcal{Q}_\text{ind} \subseteq \mathcal{Q}$, and let $\psi:\Omega \rightarrow \bar{\mathbb{R}}_+$ be a  measurable $\mathcal{Q}$-valid $\alpha$-LCB on $\mu_\text{max}$ inducing total preorder $D$ over samples in $\Omega$ and having the property that for any $\s \in \Omega$, homogeneous in $s \geq 0$, $\psi(\s) = s \alpha^{1/n}$. Then we have that for any $\x \in \Omega$,
    \begin{equation*}
        \mathcal{N}^\alpha_{\mu_\text{max}}(\x; \mathcal{Q}, D) = \psi(\x),
    \end{equation*}
    and that $\psi$ is conditionally optimal for the order $D$. Moreover, there exists a family of i.i.d. product laws: $Q_\eta = F_\eta^{\otimes n} \in \mathcal{Q}$, with $\eta > 0$ and $F_\eta$ supported on 0 and one other point from $\mathbb{R}_+$ that approximate $\psi(\x)$ from above. 
\end{theorem}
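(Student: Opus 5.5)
The plan is to prove the two inequalities $\psi \le \mathcal{N}^\alpha_{\mu_\text{max}}(\cdot;\mathcal{Q},D)$ and $\mathcal{N}^\alpha_{\mu_\text{max}}(\cdot;\mathcal{Q},D) \le \psi$ separately. The first is immediate: $\psi$ is a $\mathcal{Q}$-valid $\alpha$-LCB and induces, hence is consistent with, $D$, so Proposition~\ref{prop:psi_leq_N} gives $\psi(\x) \le \mathcal{N}^\alpha_{\mu_\text{max}}(\x;\mathcal{Q},D)$ for every $\x$. Once equality is established, $\mathcal{N}^\alpha_{\mu_\text{max}}(\cdot;\mathcal{Q},D)=\psi$ is measurable because $\psi$ is. Theorem~\ref{thm:cond_opt} then yields conditional optimality of $\psi$ for $D$. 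So everything reduces to the reverse inequality. I will obtain it by constructing the approximating family $Q_\eta$, which proves the ``moreover'' clause at the same time.

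Fix $\x\in\Omega$. By Proposition~\ref{prop:finite_above}, $c \coloneqq \psi(\x)\in[0,x_{(n)}]$ is finite. For $s>0$ let $\s$ be homogeneous in $s$, and set $F_{p,s} \coloneqq (1-p)\delta_0+p\delta_s$ and $Q_{p,s}\coloneqq F_{p,s}^{\otimes n}\in\mathcal{Q}$, so that $\mu_\text{max}[Q_{p,s}]=ps$.

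The key observation is that under $Q_{p,s}$ the sample $\s$ has probability $p^n$. If $\x \lesssim_D \s$, then $\s\in\Omega(\x,D)$, and therefore $Q_{p,s}[\Omega(\x,D)]\ge p^n$. Since $\psi$ induces $D$, the condition $\x\lesssim_D\s$ is equivalent to $c\le\psi(\s)=s\alpha^{1/n}$, i.e.\ $s\ge c\alpha^{-1/n}$.

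When $c>0$, I choose $s = c\alpha^{-1/n}$ and $p$ slightly above $\alpha^{1/n}$, say $p=\alpha^{1/n}(1+\eta')$ with $\eta'$ small; this requires $\alpha<1$ so that $p\le 1$. Then $Q_{p,s}[\Omega(\x,D)]\ge p^n>\alpha$, and
\[
\mu_\text{max}[Q_{p,s}]=ps=c(1+\eta') ,
\]
which lies in $[c,c+\eta)$ once $\eta'$ is chosen small relative to $\eta/c$.

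When $c=0$, I take $s\ge 0$ arbitrary with $\psi(\s)\ge 0=c$; this always holds, so $\x\lesssim_D\s$ for every such $\s$. Choosing $p>\alpha^{1/n}$ and $s$ small enough that $ps<\eta$ gives $\mu_\text{max}[Q_{p,s}]\in[0,\eta)$. The degenerate choice $s=0$, with $F=\delta_0$, works directly.

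In either case we get $\mathcal{N}^\alpha_{\mu_\text{max}}(\x;\mathcal{Q},D)\le \mu_\text{max}[Q_\eta] < c+\eta$ for every $\eta>0$, which gives the reverse inequality. The lower requirement $\psi(\x)\le\mu_\text{max}[Q_\eta]$ in Definition~\ref{def:family_approx} also holds: it follows from the first inequality, or directly from the construction. This completes the approximation-from-above claim.

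The main obstacle, although a mild one, is using the point mass of $Q_{p,s}$ at the single homogeneous point $\s$ correctly. We only need $\s\in\Omega(\x,D)$, which follows from $\psi(\x)\le\psi(\s)$ via the ``induces'' direction $\psi(\x)\le\psi(\y)\Rightarrow \x\lesssim_D\y$. Mere consistency would not suffice here, which is why the hypothesis that $\psi$ \emph{induces} $D$ is essential. The edge cases $c=0$ and $p\le1$ are handled as described above.
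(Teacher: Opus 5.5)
Your proposal is correct and follows essentially the same route as the paper. You get $\psi \le \mathcal{N}^\alpha_{\mu_\text{max}}$ from Proposition~\ref{prop:psi_leq_N}, and the reverse inequality from the i.i.d.\ laws on $\{0,s\}$ with $s=\psi(\x)\alpha^{-1/n}$ and mass slightly above $\alpha^{1/n}$ at $s$, using $\delta_0^{\otimes n}$ when $\psi(\x)=0$; measurability then follows from the equality, and Theorem~\ref{thm:cond_opt} gives conditional optimality. The differences from the paper are cosmetic: a multiplicative rather than additive perturbation of $p$, and using $\x \lesssim_D \bm{0}$ rather than $\x \sim_D \bm{0}$ in the zero case.
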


\begin{proof}
    To begin, we know by Proposition~\ref{prop:finite_above} that for all $\x \in \Omega$, $0 \leq \psi(\x)\leq x_{(n)}<\infty$, making $\psi$ finite valued. Proposition~\ref{prop:psi_leq_N} further establishes that 
    \begin{equation}
    \label{eq:homo_less_than_nested}
        \psi(\x) \leq \mathcal{N}^\alpha_{\mu_\text{max}}(\x; \mathcal{Q}, D). 
    \end{equation}
    Thus, it remains only to prove the opposite inequality. 
    
    Suppose first that $\psi(\x) = \psi(\bm{0}) = 0$, where $\bm{0} \coloneqq (0, \ldots, 0)$. Because $\x \sim_D \bm{0}$ we must have $\bm{0} \in \Omega(\x, D)$, so that the distribution $Q_0 \coloneqq \delta_0^{\otimes n}$ satisfies both $Q_0(\Omega(\x, D)) = 1 > \alpha$ and $\mu_\text{max}[Q_0] = 0$. Thus,
    \begin{equation*}
        \mathcal{N}^\alpha_{\mu_\text{max}}(\x; \mathcal{Q}, D) = 0,
    \end{equation*}
    and both the primary and secondary conclusions of the theorem follow.

    Next, assume that $\psi(\x) > 0$. By assumption, every value in $\mathbb{R}_+$ is achieved by $\psi$ for some homogeneous sample. As a result, there must exist some $\s \in \Omega$, homogeneous in $s = \psi(\x) / \alpha^{1/n}$, such that   
    \begin{equation*}
        \psi(\x) = \psi(\s).
    \end{equation*}
    This means that $\x \sim_{D} \s$ and $\s \in \Omega(\x, D)$. Now for each $\epsilon \in (0, 1-\alpha^{1/n})$ define 
    \begin{equation*}
        F_\epsilon \coloneqq (1- \alpha^{1/n} - \epsilon) \delta_0 + (\alpha^{1/n} + \epsilon) \delta_s,
    \end{equation*}
    and define $Q_\epsilon \coloneqq F_\epsilon^{\otimes n}$. Since $F_\epsilon$ assigns mass $(\alpha^{1/n} + \epsilon)$ to $s$, we must have
    \begin{equation*}
        \alpha < (\alpha^{1/n} + \epsilon)^n \leq Q_\epsilon[\Omega(\x, D)].
    \end{equation*}
    Furthermore, $\mu_\text{max}[Q_\epsilon] = \psi(\x) + s \epsilon$. Therefore,
    \begin{equation}
    \label{eq:nested_less_than_homo}
        \mathcal{N}^\alpha_{\mu_\text{max}}(\x; \mathcal{Q}, D) \leq \psi(\x) + s \epsilon,
    \end{equation}
    and the desired inequality is established by allowing $\epsilon \rightarrow 0$. Since $\psi(\x) = \mathcal{N}^\alpha_{\mu_\text{max}}(\x; \mathcal{Q}, D)$, for all $\x \in \Omega$, and $\psi$ is measurable by assumption, it must also be the case that $\x \mapsto \mathcal{N}^\alpha_{\mu_\text{max}}(\x; \mathcal{Q}, D)$ is measurable. We also have, by Proposition~\ref{prop:nested_consis_with_D}, that $\x \mapsto \mathcal{N}^\alpha_{\mu_\text{max}}(\x; \mathcal{Q}, D)$ is consistent with $D$. Thus, $\psi$ is conditionally optimal for the order $D$ by Theorem~\ref{thm:cond_opt}.
    
    Finally, fix $\eta > 0$. If $\psi(\x) = 0$, then we can take $Q_0 = \delta_0^{\otimes n}$ as our family of approximating distributions. Otherwise, choose $\epsilon \in (0, \min\{1-\alpha^{1/n}, \eta/s\})$. We then have $Q_\epsilon[\Omega(\x, D)] > \alpha$ and 
    \begin{equation*}
        \begin{array}{rcl}
            \psi(\x) &\leq& \mu_\text{max}[Q_\epsilon] \\
            &=& \psi(\x) + s \epsilon \\
            &<& \psi(\x) + \eta \\
        \end{array}.
    \end{equation*}
\end{proof}

\begin{corollary}
\label{cor:dominance_cond}
    Let $\alpha \in (0, 1)$ be fixed and let $\psi: \Omega \rightarrow \bar{\mathbb{R}}_+$ be defined as in Theorem~\ref{thm:homo_forces_cop}. Then $\psi$ cannot be dominated by any $\mathcal{Q}_\text{ind}$-valid $\alpha$-LCB on $\mu_\text{max}$ that cannot be approximated by a family of i.i.d. product laws, each having marginals supported on at most two points.
\end{corollary}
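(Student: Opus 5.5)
We argue by contradiction. Suppose $\psi'$ is a $\mathcal{Q}_\text{ind}$-valid $\alpha$-LCB on $\mu_\text{max}$ with $\psi < \psi'$ in the sense of Definition~\ref{def:dominates}. We will show that $\psi'$ can be approximated from above, at every sample, by a family of i.i.d. product laws whose marginals are supported on at most two points. Hence any dominating bound must admit such an approximation, which is the contrapositive of the corollary. The governing principle is that domination preserves the homogeneous values.

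First, we pin down $\psi'$ on homogeneous samples. Since $\psi \leq \psi'$ pointwise, for every $\s$ homogeneous in $s$ we have $\psi'(\s) \geq s\alpha^{1/n}$. For the reverse inequality, consider the law $F^{\otimes n}$ with $F = (1-\alpha^{1/n}-\epsilon)\delta_0 + (\alpha^{1/n}+\epsilon)\delta_s$. Its mean is $s(\alpha^{1/n}+\epsilon)$, and it places mass greater than $\alpha$ on the single point $\s$. If $\psi'(\s) > s(\alpha^{1/n}+\epsilon)$, then $F^{\otimes n}[\psi'(\X) > \mu_\text{max}] > \alpha$, which contradicts validity. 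Letting $\epsilon \to 0$ gives $\psi'(\s) = s\alpha^{1/n}$. This step uses only validity, not consistency with $D$, which matters because $\psi'$ need not induce $D$.

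Second, we treat an arbitrary $\x$. Set $t = \psi'(\x)$. By Proposition~\ref{prop:finite_above}, $0 \leq t \leq x_{(n)} < \infty$. If $t = 0$, the law $\delta_0^{\otimes n}$ approximates trivially. Otherwise, let $\s$ be homogeneous in $s = t/\alpha^{1/n}$, so that $\psi'(\s) = t$, and use the laws $Q_\epsilon = F_\epsilon^{\otimes n}$ from the proof of Theorem~\ref{thm:homo_forces_cop}. These satisfy $\mu_\text{max}[Q_\epsilon] = t + s\epsilon$, which gives the required mean bracket $t \leq \mu_\text{max}[Q_\epsilon] < t + \eta$ once $\epsilon < \eta/s$.

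The main obstacle is the coverage requirement $Q_\epsilon[\Omega(\x, D')] > \alpha$, where $D'$ is the preorder induced by $\psi'$. It holds provided $\s \in \Omega(\x, D')$, which follows from $\psi'(\s) = \psi'(\x)$. So if $\psi'$ is measurable and induces some preorder, the argument goes through exactly as in the proof of Theorem~\ref{thm:homo_forces_cop}, and $\psi'$ is approximable. This contradicts the hypothesis that $\psi'$ cannot be so approximated, and hence no such $\psi'$ dominates $\psi$.

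If $\psi'$ is not assumed to induce an order, we fall back on the order it does induce through its values, $\x \lesssim \y \iff \psi'(\x) \leq \psi'(\y)$. Definition~\ref{def:family_approx} refers only to this induced order, so the same argument applies. Throughout, we check that the two-point support $\{0, s\}$ matches the "at most two points" clause, including the degenerate case $s = 0$.
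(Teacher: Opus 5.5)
Your proposal is correct and follows the paper's proof. You first use dominance together with validity against the laws $((1-p)\delta_0+p\delta_s)^{\otimes n}$, with $p$ just above $\alpha^{1/n}$, to force $\psi'(\s)=s\alpha^{1/n}$ on homogeneous samples. You then run the argument of Theorem~\ref{thm:homo_forces_cop} for $\psi'$ under the preorder it induces; the paper simply cites that theorem and treats $s=0$ separately, whereas your $\epsilon$-family covers that case uniformly. Your hedge about measurability and induced orders is unnecessary: an $\alpha$-LCB is measurable by Definition~\ref{def:alpha_bound}, and every function induces a total preorder through its values.
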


\begin{proof}
    We must prove that any bound $\psi'$ dominating $\psi$ is such that for each $\x \in \Omega$, $\psi'(\x)$ is approximated by an i.i.d. family of distributions supported on two points. By Definition~\ref{def:dominates}, we must have $\psi(\x) \leq \psi'(\x)$ for each $\x \in \Omega$. This is true in particular for every homogeneous sample $\s$. Furthermore, dominance establishes that $\psi'$ must be non-negative and Proposition~\ref{prop:finite_above} further shows that $\psi'$ must be finite. 

    Next, we argue that $\psi'(\s) = s \alpha^{1/n}$ for each $\s \in \Omega$, homogeneous in $s \in \mathbb{R}_+$. Since $\psi'$ is assumed to dominate $\psi$, we must have $s \alpha^{1/n} \leq \psi'(\s)$. Now suppose first that $s = 0$ and let $\bm{0} = (0, \ldots, 0)$. In order for $\psi'$ to be valid we must have $Q[\psi'(\X) > \mu_\text{max}[Q]] \leq \alpha$ for the distribution $Q \coloneq \delta_0^{\otimes n}$. This can occur only if $\psi'(\bm{0}) \leq 0$. By assumption $\psi(\bm{0}) = 0$, and dominance gives $0 \leq \psi'(\bm{0})$. Thus, $\psi'(\bm{0}) = \psi(\bm{0})$.
    
    Next, suppose that $s > 0$ and $s \alpha^{1/n} < \psi'(\s)$. Choose $p \in (\alpha^{1/n}, \min\{1, \psi'(\s)/s\})$ and define $Q_p \coloneqq F_p^{\otimes n}$ where $F_p \coloneqq (1 - p) \delta_0 + p \delta_s$. We have $\mu_\text{max}[Q_p] = p s$, which implies by construction that $\mu_\text{max}[Q_p] < \psi'(\s)$. We also have that $Q_p[\psi'(\X) > \mu_\text{max}[Q_p]] \geq p^n > \alpha$. According to Definition~\ref{eq:obliv_alpha_bound}, this contradicts the assumption that $\psi'$ is $\mathcal{Q}_\text{ind}$-valid. Thus, we must have that $s \alpha^{1/n} \geq \psi'(\s)$, which together with the dominance argument, implies $\psi'(\s) = s \alpha^{1/n}$.
    
    The arguments above establish that the conditions of Theorem~\ref{thm:homo_forces_cop} also apply to $\psi'$ with $\mathcal{Q} = \mathcal{Q}_\text{ind}$ and $D$ being the total preorder induced by $\psi'$, and the conclusion follows.
\end{proof}

\section{Gaffke's Bound}

In this section we prove two facts about Gaffke's bound. First, the bound dominates every other bound consistent with the same total preorder over samples. Second, Gaffke's bound for any particular sample can be approached arbitrarily closely by the maximum marginal means of feasible i.i.d. laws whose common marginal distribution is supported on at most two points. As in Section~\ref{sec:two_point}, we continue to assume that the parameter under consideration is $\pi[Q] = \mu_\text{max}[Q]$, and restrict our attention to $\mathcal{Q}_\text{ind}$, product laws. N.B.: the restriction to product laws is essential for the validity result used below. 

Because $\mathcal{Q}_\text{ind}$ contains the product of arbitrary point-mass distributions, its aggregate support is $\Omega = \mathbb{R}_+^n$. Let $\Delta \coloneqq \{\uu \in [0,1]^n: 0 \leq u_1 \leq \ldots \leq u_n \leq 1\}$ be the simplex of $n$ uniform order statistics, and let $\lambda$ be the normalized Lebesgue measure on $\Delta$. For $\x, \y \in \Omega$, we write $\x \leq \y$ when $x_{(i)} \leq y_{(i)}$ for each $i \in [n]$, and we write $\x < \y$ when at least one of those inequalities is strict, where $x_{(i)}$ represents the $i$th entry of $\x$ when its elements are arranged in increasing order.

\begin{definition}
\label{def:consv_compl}
    For arbitrary $\x \in \Omega$ and $\uu \in \Delta$, the \emph{conservative completion} of $\x$ with respect to $\uu$ is given by
    \begin{equation*}
        c(\x, \uu) \equiv \sum\limits_{i=1}^n x_{(i)} (u_{i+1}-u_{i}),
    \end{equation*}
    where we define $u_{n+1} \equiv 1$.
\end{definition}

\begin{definition}
\label{eq:gaffke_rej}
    For arbitrary $\x \in \Omega$ and $\uu \in \Delta$, the \emph{Gaffke sublevel region} is given by
    \begin{equation*}
        R^*(\x, \uu) \equiv \{\w \in \Delta: c(\x, \w) \leq c(\x, \uu)\}
    \end{equation*}
\end{definition}

\begin{definition}[Gaffke's Bound~\cite{gaffke2005three}]
\label{def:gaffke}
    For each $\alpha \in [0,1)$ and $\x \in \Omega$, 
    \begin{equation*}
        \mathcal{G}^\alpha(\x) \equiv \inf \left\{c(\x, \uu): \uu \in \Delta, \lambda[R^*(\x, \uu)] > \alpha \right\}.
    \end{equation*} 
    Furthermore, we define $\mathcal{G}^1(\x) \coloneqq \infty$.
\end{definition}

\begin{definition}
\label{def:gaffkes_order}
    For fixed $\alpha \in [0,1]$, the \emph{Gaffke order} $D^*_\alpha$ is the total preorder defined such that for all $\x, \y \in \Omega$, 
    \begin{equation*}
        \x \lesssim_{D^*_\alpha} \y \Leftrightarrow \mathcal{G}^\alpha(\x) \leq \mathcal{G}^\alpha(\y).
    \end{equation*}
\end{definition}

\begin{remark}
    The measurability of $\mathcal{G}^\alpha$ and the graph of $D^*_\alpha$ can be established using standard arguments, which we omit here.  
\end{remark}

\begin{theorem}[Vlassis-Thomas, confidence-bound form]
\label{thm:vla_tho}
Take $Q = F_1 \otimes \cdots \otimes F_n$ to be in $\mathcal{Q}_\text{ind}$ and let $\X \sim Q$. 
Then for any $\alpha \in [0, 1]$, 
\begin{equation*}
    Q[\mathcal{G}^\alpha(\X) > \mu_\text{max}[Q]] \leq \alpha,
\end{equation*}
i.e. Gaffke's bound $\mathcal{G}^\alpha$ is a valid $\alpha$-LCB.
\end{theorem}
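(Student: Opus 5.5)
The plan is to translate the statement into a ``$p$-value'' inequality and then invoke (or reprove) the distributional inequality of Vlassis and Thomas~\cite{vlassis2026exact}.

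\emph{Trivial cases.} The cases $\alpha = 1$ (where $\mathcal{G}^1 \equiv \infty$ but the bound $\leq 1$ is trivial) and $\mu_\text{max}[Q] = \infty$ (where the event $\mathcal{G}^\alpha(\X) > \infty$ is empty) need nothing. So fix $\alpha \in [0,1)$ and $Q = F_1 \otimes \cdots \otimes F_n$ with $\mu \coloneqq \mu_\text{max}[Q] < \infty$.

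\emph{Step 1: reduce to a one-sided $\lambda$-probability.} For $\x \in \Omega$ set
\begin{equation*}
    h_\mu(\x) \coloneqq \lambda[\{\w \in \Delta : c(\x, \w) \leq \mu\}].
\end{equation*}
I will show that $\mathcal{G}^\alpha(\x) > \mu$ implies $h_\mu(\x) \leq \alpha$. Suppose instead $h_\mu(\x) > \alpha$. The map $\w \mapsto c(\x,\w)$ is continuous on the compact set $\Delta$. Choose $\uu^\ast$ maximizing $c(\x,\cdot)$ over the closed set $\{c(\x,\cdot) \leq \mu\}$. Then $R^*(\x,\uu^\ast)$ is exactly that set, so $\lambda[R^*(\x,\uu^\ast)] > \alpha$. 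Hence $\mathcal{G}^\alpha(\x) \leq c(\x,\uu^\ast) \leq \mu$, a contradiction. It therefore suffices to prove the super-uniformity statement
\begin{equation*}
    Q[h_\mu(\X) \leq \alpha] \leq \alpha .
\end{equation*}

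\emph{Step 2: represent the observations through uniforms.} Write $X_i = F_i^{-1}(V_i)$ with $V_1,\dots,V_n$ i.i.d.\ uniform on $[0,1]$. Let $\mathcal{U}$ be their sorted vector, which has law $\lambda$ on $\Delta$.

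The identity to exploit comes from the i.i.d.\ case $F_i = F$. There $\mu = \int_0^1 F^{-1}(v)\,dv$. The conservative completion $c(\X,\w)$ is the mean of the step quantile function taking the value $x_{(i)}$ on $[w_i, w_{i+1})$ and $0$ below $w_1$. When $\w$ lies ``above'' $\mathcal{U}$ coordinatewise, that step function lies below $F^{-1}$ pointwise, so $c(\X,\w) \leq \mu$. Hence $h_\mu(\X) \geq \lambda[\{\w \in \Delta : \w \succeq \mathcal{U}\}]$. The latter is a deterministic decreasing function of $\mathcal{U}$ alone, and its law does not depend on $F$.

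I then need the exact statement that, for $\mathcal{U} \sim \lambda$, the variable $g(\mathcal{U}) \coloneqq \lambda[\{\w \succeq \mathcal{U}\}]$ is super-uniform. The ordering $\w \succeq \uu$ is not total, so I will replace it by the total order induced by $c(\x,\cdot)$. Conditionally on the order statistics, the probability integral transform gives uniformity of $\lambda[\{\w : c(\x,\w) \leq c(\x,\mathcal{U})\}]$. Monotonicity of $c$ in the quantile function then gives domination.

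\emph{Step 3: non-identical marginals.} I will use $\mu \geq \mu_i$ for every $i$ and the monotonicity of $h_\mu$: $c(\x,\w)$ is nondecreasing in each $x_{(i)}$, so $h_\mu$ is nonincreasing in $\x$ under the order $\leq$. The aim is to dominate the sorted vector of $\X$ by the sorted vector of an i.i.d.\ sample from a single $F^\ast$ with mean $\le \mu$ (for instance a mixture of the $F_i$, or the ``worst'' marginal), so that the event $h_\mu(\X) \leq \alpha$ is stochastically smaller than in the i.i.d.\ case.

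\emph{Main obstacle.} This last comparison is where I expect the real difficulty. Sorting does not commute with mixing heterogeneous marginals, and a simple stochastic-dominance coupling fails in general. My first attempt will be an induction on $n$ that replaces one marginal $F_i$ at a time: conditioning on the remaining coordinates, I will use that $x_i \mapsto \mathbf{1}\{h_\mu(\x) \leq \alpha\}$ is a monotone (threshold) function. An expectation of a threshold function under a mean-$\mu_i$ law is then maximized, by a convexity and extreme-point argument, at two-point laws on $\{0, s\}$. This parallels the two-point structure used in Section~\ref{sec:two_point}.

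If the induction does not close cleanly, I will fall back on citing the finite-sample validity theorem of Vlassis and Thomas~\cite{vlassis2026exact} for the super-uniformity of $h_\mu(\X)$ under independent non-identical nonnegative marginals. Combining that result with Step 1 yields the stated bound.
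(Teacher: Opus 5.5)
Your trivial cases and Step 1 are correct. Step 1 together with the citation you keep in reserve is exactly the paper's proof. The paper's $K_t$ is your $h_t$, and it shows $\{\mathcal{G}^\alpha > t\} \subseteq \{K_t \le \alpha\}$; your compactness argument does this more carefully. It then simply invokes Vlassis--Thomas for $Q[K_{\mu_\text{max}[Q]}(\X) \le \alpha] \le \alpha$. The problem is that the citation is not a fallback but the entire content, because your self-contained route fails at concrete points.

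\textbf{Step 2.} $g(\mathcal{U})$ is \emph{sub}-uniform, not super-uniform. Indeed $g(\uu) \le \lambda[\{\w \in \Delta : w_1 \ge u_1\}] = (1-u_1)^n$, and $(1-\mathcal{U}_1)^n$ is exactly uniform since $\mathcal{U}_1 \sim \texttt{Beta}(1,n)$. Hence $\mathbb{P}[g(\mathcal{U}) \le \alpha] \ge \alpha$, strictly for $n \ge 2$, so $h_\mu(\X) \ge g(\mathcal{U})$ only gives a bound larger than $\alpha$. Switching to the total order of $c(\X,\cdot)$ does not repair this, because the two facts you combine need incompatible couplings:
\begin{itemize}
\item $c(\X,\mathcal{U}) \le \mu$ holds only when $\mathcal{U}$ is the latent vector with $X_{(i)} = F^{-1}(\mathcal{U}_i)$;
\item the probability integral transform for $\lambda[\{\w : c(\x,\w) \le c(\x,\mathcal{U})\}]$ needs $\mathcal{U} \sim \lambda$ independent of $\x$.
\end{itemize}
Given the order statistics, the latent $\mathcal{U}$ is determined when $F$ is continuous, and otherwise confined to a product of intervals. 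Its conditional law is therefore not $\lambda$. What you need, $Q[h_\mu(\X) \le \alpha] \le \alpha$, is nontrivial even for i.i.d.\ data and is what the Vlassis--Thomas theorem supplies.

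\textbf{Step 3.} When you condition on $\x_{-i}$, the worst-case threshold $s$ depends on $\x_{-i}$. Your induction therefore bounds a supremum over non-product laws, in which each $X_i$ may depend on the coordinates you conditioned on, subject only to a conditional-mean bound. That supremum exceeds $\alpha$. Take $n=2$, $\mu=1$, $\alpha=0.05$, and $h_1(x_1,x_2) = \mathbb{P}_{\D}[x_1 D_1 + x_2 D_2 \le 1]$. This equals $1-(1-1/x_1)^2$ when $x_2 = 0$ and $x_1 \ge 1$, and $1/(x_1 x_2)$ when $x_1, x_2 \ge 1$. Define the law as follows:
\begin{itemize}
\item $X_2 \in \{0, 20\}$ with probabilities $0.95$ and $0.05$;
\item given $X_2 = 0$, $X_1 = \tau_0 \coloneqq 1/(1-\sqrt{1-\alpha})$ with probability $1/\tau_0$ and $X_1 = 0$ otherwise;
\item given $X_2 = 20$, $X_1 = 1$.
\end{itemize}
Both marginal means are $1$, yet $\mathbb{P}[h_1(\X) \le \alpha] = 0.95(1-\sqrt{0.95}) + 0.05 \approx 0.074$. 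This also shows why the paper insists on product structure. If you instead integrate out the other coordinates to keep that structure, the function of $x_i$ is no longer a threshold. Its extremal laws are then general two-point laws, not laws on $\{0,s\}$, and you still need the inequality for products of such laws. Once one is down to i.i.d.\ laws on $\{0,s\}$, the claim is just validity of the exact binomial test, so all the difficulty lies in the reduction.

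So present Step 1 plus the citation, as the paper does, with the same translation:
\begin{itemize}
\item write $h_\mu(\x) = \mathbb{P}_{\D}[\sum_i x_i D_i \le \mu]$ via exchangeability of flat Dirichlet spacings;
\item rescale by $\mu$, treating $\mu = 0$ separately (there $h_0(\bm 0) = 1$);
\item if the cited theorem requires equal means, combine your monotonicity of $h_\mu$ with the pointwise shift $X_i + \mu - \mu_i$.
\end{itemize}
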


\begin{proof}
    Fix $Q \in \mathcal{Q}_\text{ind}$. The conclusion clearly holds if either $\alpha = 1$ or $\mu_\text{max}[Q] = 0$, so we proceed under the assumption that $\alpha < 1$ and $\mu_\text{max}[Q] > 0$. 
    
    We first handle the case where $\mu_\text{max}[Q] = \infty$. For each $\x \in \Omega$, $c(\x, \uu)$ achieves its maximum value of $x_{(n)}$ when $\uu = \bm{0}$. Thus, $\mathcal{G}^\alpha(\x) < \infty$. Therefore, $Q[\mathcal{G}^\alpha(\X) > \mu_\text{max}[Q]] = 0$, making $\mathcal{G}^\alpha$ an $\alpha$-LCB for any $\alpha \in [0, 1]$.
    
    Next, we turn to the case where $\mu_\text{max}[Q]$ is finite. For arbitrary $t \in \mathbb{R}_+$ define
    \begin{equation*}
        K_t(\x) \coloneqq \lambda [\uu \in \Delta: c(\x, \uu) \leq t].
    \end{equation*}
    Taking $\D = (D_0, D_1, \ldots, D_n) \sim \texttt{Dir}(1, \ldots, 1)$, and $\mathbb{P}_\D$ its probability measure, we have the following equivalence by the exchangeability of the entries of $D$ as well as its interpretation as uniform spacings. 
    \begin{equation*}
        K_t(\x) = \mathbb{P}_\D \left[ \sum_{i=1}^n x_i D_i  \leq t \right].
    \end{equation*}
    Next, define $\Y$ so that $Y_i = X_i / \mu_\text{max}[Q]$. The $Y_i$ are independent, non-negative, and the mean of each is no greater than 1. Moreover, $K_{\mu_\text{max}[Q]}(\X) = K_1(\Y)$ by construction. It follows from the Vlassis-Thomas Theorem~\cite{vlassis2026exact} that
    \begin{equation}
    \label{eq:vt_extended}
        Q[K_{\mu_\text{max}[Q]}(\X) \leq \alpha] \leq \alpha.
    \end{equation}
    Furthermore, by Definition~\ref{eq:gaffke_rej}  we have that $\forall \uu \in \Delta$,
    \begin{equation}
    \label{eq:R_to_K}
        \lambda[R^*(\x, \uu)] = K_{c(\x, \uu)}(\x).
    \end{equation}
    Notice that by choosing $\uu = (r, \ldots, r)$, $r \in [0, 1]$, $c(\x, \uu)$ can achieve every value for $t$ in the range $[0, x_{(n)}]$, which surely contains the infimum of values $t$ such that $K_t(\x) > \alpha$. Thus, by applying Equation~\ref{eq:R_to_K} to Definition~\ref{def:gaffke} we have
    \begin{equation}
    \label{eq:VT_to_Gaffke}
        \mathcal{G}^\alpha(\x) = \inf \{t \geq 0: K_t(\x) > \alpha \}.
    \end{equation}
    
    Now suppose that $\mathcal{G}^\alpha(\x) > t$. Then according to Equation~\ref{eq:VT_to_Gaffke} it must also be the case that $K_t(\x) \leq \alpha$, i.e.,
    \begin{equation}
    \label{eq:G_contains_K}
        \{\x: \mathcal{G}^\alpha(\x) > t\} \subseteq \{\x: K_t(\x) \leq \alpha\}.
    \end{equation}
    It follows then that
    \begin{equation*}
        \begin{array}{rcll}
        Q[\mathcal{G}^\alpha(\X) > \mu_\text{max}[Q]] &\leq& Q[K_{\mu_\text{max}[Q]}(\X) \leq \alpha] & \text{Equation~\ref{eq:G_contains_K}} \\
        &\leq& \alpha & \text{Inequality~\ref{eq:vt_extended}}
        \end{array}.
    \end{equation*}
    Thus, the conclusion follows according to Definition~\ref{def:alpha_bound}.
\end{proof}

\begin{remark}
    For heterogeneous marginal means, Theorem~\ref{thm:vla_tho} controls $\mu_\text{max}$, not the average marginal mean $\frac{1}{n}\sum_{i \in [n]} \mu_i[Q]$. An average-mean interpretation therefore requires either the common-mean assumption or an additional restriction relating the marginal means.  
\end{remark}

\begin{proposition}
\label{prop:c_monotone_in_x}
    For fixed $\uu \in \Delta$ and any $\x, \y \in \Omega$, if $\x \leq \y$, then $c(\x, \uu) \leq c(\y, \uu)$. 
\end{proposition}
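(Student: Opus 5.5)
The plan is to prove the inequality term by term. The conservative completion $c(\x,\uu)$ is a linear combination of the sorted entries $x_{(i)}$. Each coefficient $u_{i+1}-u_i$ is a nonnegative spacing, and the hypothesis $\x \leq \y$ compares exactly these sorted entries. So a coefficient-wise comparison should close the argument.

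First I will check the sign of the weights. By the definition of $\Delta$ we have $0 \leq u_1 \leq \cdots \leq u_n \leq 1$. With the convention $u_{n+1} \equiv 1$ from Definition~\ref{def:consv_compl}, this gives $u_{i+1}-u_i \geq 0$ for every $i \in [n]$, including $i=n$, where the weight is $1-u_n \geq 0$. Second, $\x \leq \y$ means $x_{(i)} \leq y_{(i)}$ for every $i \in [n]$. Multiplying each such inequality by the nonnegative weight $u_{i+1}-u_i$ preserves it, so
\begin{equation*}
    x_{(i)}(u_{i+1}-u_i) \leq y_{(i)}(u_{i+1}-u_i), \qquad i \in [n].
\end{equation*}
Summing over $i \in [n]$ then yields $c(\x,\uu) \leq c(\y,\uu)$ directly from Definition~\ref{def:consv_compl}.

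I do not expect a real obstacle. The one point to watch is that the partial order $\leq$ on $\Omega$ is defined through order statistics, not raw coordinates. This matches the fact that $c$ depends on $\x$ only through its sorted entries, so no rearrangement argument is needed. All quantities are finite, because $\Omega = \mathbb{R}_+^n$ and the weights lie in $[0,1]$, so there are no issues with extended-real arithmetic.
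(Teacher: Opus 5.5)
Your argument is correct: the spacings $u_{i+1}-u_i$ are nonnegative on $\Delta$, including the last weight $1-u_n$. Since $\x \leq \y$ is defined through the sorted entries, comparing term by term and summing gives the claim. The paper states this proposition without proof and treats it as immediate, and your argument is the routine verification it leaves to the reader.
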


\begin{proposition}
\label{prop:gaffke_equiv}
    For each $\alpha \in [0,1]$, $\x \in \Omega$, and $R \subseteq \Delta$ a closed set with $\lambda[R] > \alpha$,
    \begin{equation*}
        \mathcal{G}^\alpha(\x) \leq  \max_{\uu \in R} ~c(\x, \uu).
    \end{equation*}
\end{proposition}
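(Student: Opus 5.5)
The plan is to reduce the statement to Definition~\ref{def:gaffke} by exhibiting a point $\uu^\ast \in \Delta$ with
\begin{equation*}
    c(\x, \uu^\ast) = \max_{\uu \in R} c(\x,\uu) \quad\text{and}\quad \lambda[R^*(\x,\uu^\ast)] \geq \lambda[R] > \alpha .
\end{equation*}
If $\alpha = 1$ the hypothesis $\lambda[R] > \alpha$ is impossible, since $\lambda$ is normalized. Hence we may assume $\alpha \in [0,1)$ and $R$ nonempty.

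\textbf{Step 1: the maximizer.} For fixed $\x$, the map $\uu \mapsto c(\x,\uu) = \sum_{i=1}^n x_{(i)}(u_{i+1}-u_i)$ is affine, hence continuous, on $\Delta$. Since $R$ is closed in the compact set $\Delta$, it is compact, so the maximum over $R$ is attained at some $\uu^\ast \in R$. This is the only place where closedness of $R$ is used.

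\textbf{Step 2: containment of $R$.} For every $\w \in R$ we have $c(\x,\w) \leq c(\x,\uu^\ast)$. By Definition~\ref{eq:gaffke_rej} this means $\w \in R^*(\x,\uu^\ast)$, so $R \subseteq R^*(\x,\uu^\ast)$.

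\textbf{Step 3: measurability and monotonicity.} The set $R^*(\x,\uu^\ast)$ is a closed sublevel set of a continuous function, so it is measurable. Monotonicity of $\lambda$ then gives $\lambda[R^*(\x,\uu^\ast)] \geq \lambda[R] > \alpha$.

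\textbf{Step 4: conclusion.} By Step 3, $\uu^\ast$ is admissible in the infimum of Definition~\ref{def:gaffke}. Therefore
\begin{equation*}
    \mathcal{G}^\alpha(\x) \leq c(\x,\uu^\ast) = \max_{\uu\in R} c(\x,\uu).
\end{equation*}

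The argument is elementary, and the only real obstacle is the bookkeeping in Step 1: confirming that the maximum in the statement is attained, which is why closedness is assumed. If $R$ were merely measurable, one would replace the maximum by a supremum and argue the same way with the closed sublevel set $\{\w : c(\x,\w) \leq \sup_R c\}$. That would then require choosing $\uu$ on the level set, which is possible because $c(\x,\cdot)$ is continuous on a connected set. This is unnecessary here.
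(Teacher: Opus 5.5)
Your proof is correct and follows the paper's argument: take a maximizer $\uu'$ of $c(\x,\cdot)$ over $R$, note that $R\subseteq R^*(\x,\uu')$ so $\lambda[R^*(\x,\uu')]\geq\lambda[R]>\alpha$, and conclude that $\uu'$ is feasible in the infimum defining $\mathcal{G}^\alpha(\x)$. The paper wraps this in a contradiction that the argument does not need. Your direct version also makes explicit what the paper leaves implicit: compactness for attaining the maximum, measurability of the sublevel set, and the vacuous case $\alpha=1$.
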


\begin{proof}
    Suppose for the sake of contradiction that there exists closed set $R \subseteq \Delta$ with $\lambda[R] > \alpha$ such that for all $\uu \in R$, $\mathcal{G}^\alpha(\x) > c(\x, \uu)$. This property holds in particular for the $\uu' \in R$ that achieves the maximum value for $c(\x, \uu)$. But by Definition~\ref{eq:gaffke_rej} we also have $R \subseteq R^*(\x, \uu')$ so that
    \begin{equation*}
        \alpha < \lambda[R] \leq \lambda[R^*(\x, \uu')].
    \end{equation*}
    The conclusion follows by Definition~\ref{def:gaffke}.
\end{proof}

\begin{proposition}
\label{prop:monotonocity_of_gaffke}
    For any $\alpha \in (0,1)$ and $\x, \y \in \Omega$,
    \begin{equation*}
        \x \leq \y \Rightarrow \mathcal{G}^\alpha(\x) \leq \mathcal{G}^\alpha(\y).
    \end{equation*}
\end{proposition}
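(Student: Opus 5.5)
The plan is to compare the two feasible sets in Definition~\ref{def:gaffke} and show that the one defining $\mathcal{G}^\alpha(\y)$ is dominated, in objective value, by the one defining $\mathcal{G}^\alpha(\x)$. The key tool is Proposition~\ref{prop:c_monotone_in_x}: for every fixed $\uu \in \Delta$, $c(\x,\uu) \leq c(\y,\uu)$ whenever $\x \leq \y$. This holds because $u_{i+1}-u_i \geq 0$ and $x_{(i)} \leq y_{(i)}$ termwise.

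First I dispose of the trivial case. If the feasible set for $\y$ is empty, then $\mathcal{G}^\alpha(\y) = \infty$ and there is nothing to prove. This cannot actually happen for $\alpha<1$, since at $\uu=\bm 0$ the region $R^*(\y,\bm 0)$ is all of $\Delta$.

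For the main step, fix any $\uu \in \Delta$ with $\lambda[R^*(\y,\uu)] > \alpha$. I want to show $\mathcal{G}^\alpha(\x) \leq c(\y,\uu)$; taking the infimum over such $\uu$ then gives $\mathcal{G}^\alpha(\x) \leq \mathcal{G}^\alpha(\y)$.

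The natural route is Proposition~\ref{prop:gaffke_equiv}, which requires a closed set $R$ with $\lambda[R] > \alpha$. Take $R = R^*(\y,\uu) = \{\w \in \Delta : c(\y,\w) \leq c(\y,\uu)\}$. This set is closed, because $\w \mapsto c(\y,\w)$ is linear, hence continuous, on the compact set $\Delta$, and it has mass greater than $\alpha$. For every $\w \in R$, Proposition~\ref{prop:c_monotone_in_x} gives
\begin{equation*}
    c(\x,\w) \leq c(\y,\w) \leq c(\y,\uu).
\end{equation*}
The maximum of $c(\x,\cdot)$ over the compact set $R$ is attained (and $R$ is nonempty since its mass is positive), so $\max_{\w\in R} c(\x,\w) \leq c(\y,\uu)$. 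Proposition~\ref{prop:gaffke_equiv} then yields $\mathcal{G}^\alpha(\x) \leq c(\y,\uu)$, as required.

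I do not expect a serious obstacle. The only points needing care are that the maximum in Proposition~\ref{prop:gaffke_equiv} exists and that $R$ is closed, both of which follow from the continuity and compactness noted above. Alternatively, one can avoid Proposition~\ref{prop:gaffke_equiv} and use the representation~\eqref{eq:VT_to_Gaffke}: since $\{\uu : c(\y,\uu)\le t\} \subseteq \{\uu : c(\x,\uu)\le t\}$, we have $K_t(\x) \geq K_t(\y)$ for all $t$, so the set of $t$ with $K_t > \alpha$ is larger for $\x$ and its infimum is smaller. I would mention this as a one-line cross-check.
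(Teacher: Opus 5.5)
Your proof is correct and follows essentially the same route as the paper: both apply Proposition~\ref{prop:gaffke_equiv} to the closed set $R = R^*(\y,\uu)$ for a $\uu$ feasible for $\y$, and then use Proposition~\ref{prop:c_monotone_in_x} to get $\mathcal{G}^\alpha(\x) \leq \max_{\w \in R} c(\x,\w) \leq c(\y,\uu)$. The differences are cosmetic. The paper picks an $\epsilon$-approximate minimizer $\uu_\epsilon$ and lets $\epsilon \to 0$, whereas you take the infimum over all feasible $\uu$ directly, and you also justify the closedness of $R$ and the attainment of the maximum, which the paper leaves implicit.
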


\begin{proof}    
    Definition~\ref{def:gaffke} ensures that for each $\epsilon > 0$, there exists $\uu_\epsilon \in \Delta$, 
    such that 
    \begin{equation*}
         c(\y, \uu_\epsilon) \leq \mathcal{G}^\alpha(\y) + \epsilon.
    \end{equation*} 
    with $\lambda[R^*(\y, \uu_\epsilon)] > \alpha$. 
    By Proposition~\ref{prop:gaffke_equiv}, it must be the case that for $\w' \in R^*(\y, \uu_\epsilon)$ maximizing $c(\x, \uu)$ w.r.t $\uu$,
    \begin{equation*}
        \mathcal{G}^\alpha(\x) \leq c(\x, \w').
    \end{equation*}
    But by Proposition~\ref{prop:c_monotone_in_x}, and because $\x \leq \y$, it must also be the case that for all $\w \in R^*(\y, \uu_\epsilon)$
    \begin{equation*}
        c(\x, \w) \leq c(\y, \w).
    \end{equation*}
    This is true in particular for $\w'$. It follows then that
    \begin{equation*}
        \mathcal{G}^\alpha(\x) \leq c(\x, \w') \leq c(\y, \w') \leq \max \{c(\y, \w): \w \in R^*(\y, \uu_\epsilon)\} = c(\y, \uu_\epsilon) \leq \mathcal{G}^\alpha(\y) + \epsilon.
    \end{equation*}
    The conclusion follows by allowing $\epsilon \rightarrow 0$.
\end{proof}

\begin{proposition}[Learned-Miller and Thomas~\cite{learnedmiller2019newconfidence}]
\label{prop:gaffke_homogeneous}
    For fixed $\alpha \in (0, 1)$ and sample $\x$, homogeneous in $x$, $\mathcal{G}^\alpha(\x) = x \alpha^{1/n}$. 
\end{proposition}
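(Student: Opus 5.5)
The plan is to compute $\mathcal{G}^\alpha(\x)$ exactly for a homogeneous sample through the representation in Equation~\ref{eq:VT_to_Gaffke}, namely $\mathcal{G}^\alpha(\x) = \inf\{t \geq 0 : K_t(\x) > \alpha\}$. Suppose $\x = (x, \ldots, x)$ with $x \geq 0$, and dispose of $x = 0$ first. In that case $c(\x, \uu) = 0$ for every $\uu \in \Delta$. So $K_0(\x) = \lambda[\Delta] = 1 > \alpha$, which gives $\mathcal{G}^\alpha(\x) = 0 = x\alpha^{1/n}$.

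For $x > 0$, the sum in Definition~\ref{def:consv_compl} telescopes. Since every $x_{(i)} = x$, we get
\begin{equation*}
c(\x, \uu) = x\sum_{i=1}^n (u_{i+1} - u_i) = x(1 - u_1).
\end{equation*}
Hence $K_t(\x) = \lambda[\uu \in \Delta : 1 - u_1 \leq t/x]$. Under $\lambda$, the quantity $u_1$ is the minimum of $n$ i.i.d. uniforms on $[0,1]$, so $1 - u_1$ is distributed as the maximum of $n$ uniforms. Its CDF is $r \mapsto r^n$ on $[0,1]$. Equivalently, via the Dirichlet form, $\sum_{i=1}^n x D_i = x(1 - D_0)$, and $1 - D_0 \sim \mathrm{Beta}(n,1)$. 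Either way, $K_t(\x) = \min\{(t/x)^n, 1\}$ for $t \geq 0$.

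It then remains to solve $\inf\{t \geq 0 : \min\{(t/x)^n, 1\} > \alpha\}$. Because $\alpha < 1$ and the map is continuous and strictly increasing on $[0, x]$, this infimum equals $x\alpha^{1/n}$, which proves the claim. The set condition is strict, so the infimum is not attained, but that does not affect its value.

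The only step needing care is justifying the distribution of $u_1$ under the normalized Lebesgue measure on the ordered simplex. I would handle it by the standard identification of $\lambda$ with the law of uniform order statistics: $\lambda[u_1 \geq a] = (1-a)^n$. This also requires checking that Equation~\ref{eq:VT_to_Gaffke} applies without the $\mu_\text{max}$ assumptions used elsewhere in that proof. It does, because its derivation uses only Equation~\ref{eq:R_to_K} and the fact that the range of $c(\x, \cdot)$ is $[0, x_{(n)}]$, both of which are deterministic statements about $\x$.
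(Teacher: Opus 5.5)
Your proof is correct and is essentially the paper's argument: both rest on the telescoping identity $c(\x,\uu) = x(1-u_1)$ and on $u_1$ being distributed under $\lambda$ as the minimum of $n$ uniforms, so the feasibility condition becomes $(1-u_1)^n > \alpha$. The only difference is cosmetic: you pass through the threshold form $\inf\{t \geq 0 : K_t(\x) > \alpha\}$ from the Vlassis--Thomas proof (which, as you rightly check, holds deterministically for $\alpha<1$), whereas the paper works directly with $\lambda[R^*(\x,\uu)] = (1-u_1)^n$ in Definition~\ref{def:gaffke}.
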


\begin{proof}
    If $x = 0$, then it's clear that $c(\x, \uu) = 0$ for every $\uu \in \Delta$, making $\mathcal{G}^\alpha(\x) = 0$ as well, so that the conclusion follows immediately.
    Now assume that $x > 0$. Since $\x$ is  homogeneous, for each $\vv \in \Delta$ we have
    \begin{equation}
    \label{eq:cc_homogeneous}
        c(\x, \vv) = x(1-v_1).
    \end{equation}
    Thus, 
    \begin{equation*}
        R^*(\x, \uu) = \{\w \in \Delta: w_1 \geq u_1\}.
    \end{equation*}
    It is well known that the first uniform order statistic $U_1$ is distributed as $\texttt{Beta}(1, n)$ with CDF equal to $t \mapsto 1 - (1 - t)^n$. Therefore, 
    \begin{equation*}
        \lambda[R^*(\x, \uu)] = (1 - u_1)^n.
    \end{equation*}
    For any $\uu \in \Delta$ to achieve $\lambda[R^*(\x, \uu)] > \alpha$, it must be the case then that
    \begin{equation}
        \label{eq:homo_u_1}
        1 - u_1 > \alpha^{1/n}.
    \end{equation}
    Applying this to Equation~\ref{eq:cc_homogeneous}, we have
    \begin{equation}
    \label{eq:gaffke_homo}
        \begin{array}{rcll}
        \mathcal{G}^\alpha(\x) &=& \inf\{c(\x, \uu): \lambda[R^*(\x, \uu)] > \alpha\} & \\
        &=& \inf\{c(\x, \uu): 1 - u_1 > \alpha^{1/n}\} & \text{Equation~\ref{eq:homo_u_1}}\\ 
        &=& \inf\{x(1-u_1): u_1 < 1-\alpha^{1/n}\} & \text{Equation~\ref{eq:cc_homogeneous}} \\ 
        &=& x \alpha^{1/n}
        \end{array}.
    \end{equation}
\end{proof}

\begin{theorem}
\label{thm:gaffke_eq_cop}
    If $\alpha \in (0, 1)$ and $\x \in \Omega$ is any sample, then
    \begin{equation*}
        \mathcal{N}^\alpha_{\mu_\text{max}}(\x; \mathcal{Q}_\text{ind}, D^*_\alpha) = \mathcal{G}^\alpha(\x), 
    \end{equation*}  
    and there exists a family of i.i.d. product laws: $Q_\eta = F_\eta^{\otimes n} \in \mathcal{Q}_\text{ind}$, with $\eta > 0$ and $F_\eta$ supported on 0 and one other point from $\mathbb{R}_+$ that approximate $\mathcal{G}^\alpha(\x)$ from above. Consequently, $\mathcal{G}^\alpha$ is conditionally optimal for the order $D_\alpha^*$ over $\mathcal{Q}_\text{ind}$.
\end{theorem}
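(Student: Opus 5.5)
The plan is to obtain the statement as a direct instance of Theorem~\ref{thm:homo_forces_cop}, with $\mathcal{Q} = \mathcal{Q}_\text{ind}$, $\psi = \mathcal{G}^\alpha$ and $D = D^*_\alpha$. That theorem delivers all three conclusions at once: the equality $\mathcal{N}^\alpha_{\mu_\text{max}}(\x; \mathcal{Q}_\text{ind}, D^*_\alpha) = \mathcal{G}^\alpha(\x)$, the approximating family of i.i.d. laws with two-point marginals on $\{0, s\}$, and conditional optimality for $D^*_\alpha$. So the work reduces to checking its hypotheses one at a time.

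The hypotheses are as follows.
\begin{enumerate}
    \item The standing inclusion $\mathcal{Q}_\text{ind} \subseteq \mathcal{Q}$ holds trivially.
    \item $\mathcal{G}^\alpha$ takes values in $\bar{\mathbb{R}}_+$, because $c(\x, \uu) \geq 0$ for every $\x \in \mathbb{R}_+^n$ and $\uu \in \Delta$. For $\alpha < 1$ the feasible set in Definition~\ref{def:gaffke} is nonempty (take $\uu = \bm 0$, for which $R^*(\x,\bm 0) = \Delta$ has measure $1 > \alpha$), so $\mathcal{G}^\alpha$ is in fact finite.
    \item $\mathcal{G}^\alpha$ is measurable, by the remark following Definition~\ref{def:gaffkes_order}.
    \item $\mathcal{G}^\alpha$ is a $\mathcal{Q}_\text{ind}$-valid $\alpha$-LCB on $\mu_\text{max}$, which is exactly Theorem~\ref{thm:vla_tho}.
    \item $\mathcal{G}^\alpha$ induces $D^*_\alpha$, which holds by construction in Definition~\ref{def:gaffkes_order}.
    \item For every homogeneous $\s$ with level $s \geq 0$, $\mathcal{G}^\alpha(\s) = s\alpha^{1/n}$. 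This is Proposition~\ref{prop:gaffke_homogeneous}.
\end{enumerate}

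Once these are verified, Theorem~\ref{thm:homo_forces_cop} yields the equality for every $\x \in \Omega$, the family $Q_\eta = F_\eta^{\otimes n}$ with $F_\eta = (1-\alpha^{1/n}-\epsilon)\delta_0 + (\alpha^{1/n}+\epsilon)\delta_s$, and conditional optimality over $\mathcal{Q}_\text{ind}$.

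To make the argument self-contained, I will briefly recall why the family works. Its key step is the use of the homogeneous sample $\s$ with $s = \mathcal{G}^\alpha(\x)/\alpha^{1/n}$, which satisfies $\s \sim_{D^*_\alpha} \x$. This $\s$ lies in $\Omega(\x, D^*_\alpha)$, so $Q_\epsilon[\Omega(\x,D^*_\alpha)] \geq (\alpha^{1/n}+\epsilon)^n > \alpha$, while $\mu_\text{max}[Q_\epsilon] = \mathcal{G}^\alpha(\x) + s\epsilon$. The existence of such an $\s$ uses the fact that homogeneous samples realize every nonnegative value of $\mathcal{G}^\alpha$. Proposition~\ref{prop:gaffke_homogeneous} guarantees this, since $s \mapsto s\alpha^{1/n}$ is onto $\mathbb{R}_+$ for $\alpha > 0$.

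I expect the main obstacle to be bookkeeping rather than substance. The measurability of $\mathcal{G}^\alpha$ and of the graph of $D^*_\alpha$, together with the measurability of the upper sets, is assumed via the remark and the standing conventions; I will cite these explicitly rather than reprove them. I will also note that $\alpha \in (0,1)$ is needed twice: the interval $(0, 1-\alpha^{1/n})$ of admissible $\epsilon$ must be nonempty, and Proposition~\ref{prop:psi_leq_N} is invoked inside Theorem~\ref{thm:homo_forces_cop}.
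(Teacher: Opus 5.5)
Your proposal is correct and is essentially the paper's proof: both take $\psi = \mathcal{G}^\alpha$ and verify the hypotheses of Theorem~\ref{thm:homo_forces_cop}, namely non-negativity, $\mathcal{Q}_\text{ind}$-validity via Theorem~\ref{thm:vla_tho}, inducing $D^*_\alpha$ by definition, and $\mathcal{G}^\alpha(\bm s) = s\alpha^{1/n}$ on homogeneous samples via Proposition~\ref{prop:gaffke_homogeneous}, and then invoke that theorem. Your explicit checks of measurability (via the remark) and of finiteness (via $\bm u = \bm 0$) are small additions the paper leaves implicit.
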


\begin{proof}
    It is clear from Definitions~\ref{def:consv_compl} and~\ref{def:gaffke} that $\mathcal{G}^\alpha$ is non-negative.
    We also have by Theorem~\ref{thm:vla_tho} that $\mathcal{G}^\alpha$ is a $\mathcal{Q}_\text{ind}$-valid $\alpha$-LCB on $\mu_\text{max}$, and it induces order $D^*_\alpha$ by Definition~\ref{def:gaffkes_order}. Furthermore, Equation~\ref{eq:gaffke_homo} establishes that for each $s \in \mathbb{R}_+$,  $\mathcal{G}^\alpha(\s) = s \alpha^{1/n}$ for the sample $\s$ homogeneous in $s$.  Therefore, the conclusion follows from Theorem~\ref{thm:homo_forces_cop}. 
\end{proof}

Theorem~\ref{thm:gaffke_eq_cop} establishes that Gaffke's bound is conditionally optimal for the $\mu_\text{max}$ parameter relative to the broad class of all product laws, $\mathcal{Q}_\text{ind}$. The next corollary shows that it is also conditionally optimal for the specialized classes: independent with common marginal mean and i.i.d., where in both cases $\mu_\text{max}$ reduces to the common mean of the marginal distributions.  

\begin{corollary}
\label{cor:submodel_optimality}
Fix $\alpha \in (0, 1)$ and let $\mathcal Q' \subseteq \mathcal{Q}_\text{ind}$ have closed model
support $\Omega = \mathbb{R}_+^n$ and contain every i.i.d. law with two-point marginals
\begin{equation*}
    ((1-p)\delta_0 + p\delta_s)^{\otimes n},
    \qquad p\in[0,1], \quad s \geq 0.    
\end{equation*}
Then, for every $\x \in \Omega$,
\begin{equation*}
    \mathcal{N}^\alpha_{\mu_\text{max}}(\x; \mathcal{Q}', D_\alpha^*) = \mathcal{G}^\alpha(\x).
\end{equation*}
As a result, $\mathcal{G}^\alpha$ is conditionally optimal for
$D_\alpha^*$ over $\mathcal{Q}'$.
\end{corollary}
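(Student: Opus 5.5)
The plan is to rerun the two-sided argument of Theorem~\ref{thm:homo_forces_cop} with $\mathcal{Q}'$ in place of $\mathcal{Q}$. That theorem cannot be quoted verbatim, because it assumes $\mathcal{Q}_\text{ind} \subseteq \mathcal{Q}$, whereas here we only have $\mathcal{Q}' \subseteq \mathcal{Q}_\text{ind}$. Instead I will check that each ingredient of its proof survives when the model class is shrunk to $\mathcal{Q}'$. The hypothesis that $\mathcal{Q}'$ has closed model support $\mathbb{R}_+^n$ guarantees that $\Omega$, the preorder $D^*_\alpha$ (which depends only on $\mathcal{G}^\alpha$, not on the model class), and the upper sets $\Omega(\x, D^*_\alpha)$ are the same objects as in Theorem~\ref{thm:gaffke_eq_cop}.

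For the inequality $\mathcal{G}^\alpha(\x) \leq \mathcal{N}^\alpha_{\mu_\text{max}}(\x; \mathcal{Q}', D^*_\alpha)$, I first note that validity passes to subclasses. Theorem~\ref{thm:vla_tho} gives $Q[\mathcal{G}^\alpha(\X) > \mu_\text{max}[Q]] \leq \alpha$ for every $Q \in \mathcal{Q}_\text{ind}$, hence for every $Q \in \mathcal{Q}'$, so $\mathcal{G}^\alpha$ is a $\mathcal{Q}'$-valid $\alpha$-LCB. It is consistent with (indeed induces) $D^*_\alpha$ by Definition~\ref{def:gaffkes_order}. Proposition~\ref{prop:psi_leq_N}, applied with $\mathcal{Q}'$ as the model class, then yields the inequality directly. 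That proposition uses no structural assumption on the class, so this step is immediate. Alternatively, one can use the monotonicity of the infimum: $\mathcal{Q}' \subseteq \mathcal{Q}_\text{ind}$ gives $\mathcal{N}(\cdot;\mathcal{Q}') \geq \mathcal{N}(\cdot;\mathcal{Q}_\text{ind}) = \mathcal{G}^\alpha$ by Theorem~\ref{thm:gaffke_eq_cop}.

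For the reverse inequality I reuse the explicit witnesses from the proof of Theorem~\ref{thm:homo_forces_cop}. If $\mathcal{G}^\alpha(\x) = 0$, then $\bm 0 \sim_{D^*_\alpha} \x$, and $\delta_0^{\otimes n}$ (the case $p=0$) lies in $\mathcal{Q}'$, has mean $0$, and gives mass $1 > \alpha$ to the upper set. If $\mathcal{G}^\alpha(\x) > 0$, set $s = \mathcal{G}^\alpha(\x)/\alpha^{1/n}$. By Proposition~\ref{prop:gaffke_homogeneous}, the homogeneous sample $\s$ satisfies $\mathcal{G}^\alpha(\s) = \mathcal{G}^\alpha(\x)$, so $\s \in \Omega(\x, D^*_\alpha)$. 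For small $\epsilon>0$, the law $((1-\alpha^{1/n}-\epsilon)\delta_0 + (\alpha^{1/n}+\epsilon)\delta_s)^{\otimes n}$ belongs to $\mathcal{Q}'$ by hypothesis. It puts mass at least $(\alpha^{1/n}+\epsilon)^n > \alpha$ on $\{\s\} \subseteq \Omega(\x,D^*_\alpha)$ and has $\mu_\text{max} = \mathcal{G}^\alpha(\x) + s\epsilon$. Letting $\epsilon \to 0$ gives $\mathcal{N}^\alpha_{\mu_\text{max}}(\x; \mathcal{Q}', D^*_\alpha) \leq \mathcal{G}^\alpha(\x)$.

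Combining the two inequalities gives equality pointwise. Measurability of $\x \mapsto \mathcal{N}^\alpha_{\mu_\text{max}}(\x;\mathcal{Q}',D^*_\alpha)$ then follows from that of $\mathcal{G}^\alpha$ (the Remark after Definition~\ref{def:gaffkes_order}), so Theorem~\ref{thm:cond_opt}, applied with class $\mathcal{Q}'$, yields conditional optimality. The only delicate point I expect is this class transfer: confirming that none of the Section~4 results silently rely on $\mathcal{Q}_\text{ind} \subseteq \mathcal{Q}$. The proofs I invoke (Propositions~\ref{prop:psi_leq_N}, \ref{prop:nested_consis_with_D}, Theorems~\ref{thm:nested_bound_is_alpha}, \ref{thm:cond_opt}) are stated for arbitrary $\mathcal{Q}$. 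The remaining requirement, that $\Omega$ be $\mathbb{R}_+^n$, is exactly the support hypothesis.
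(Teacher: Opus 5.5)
Your proposal is correct and follows essentially the same route as the paper. For the lower bound, your alternative via monotonicity of the infimum and Theorem~\ref{thm:gaffke_eq_cop} is exactly the paper's step, and your primary route through Proposition~\ref{prop:psi_leq_N}, with validity inherited from $\mathcal{Q}_\text{ind}$, is an equivalent shortcut. For the upper bound, you rebuild the two-point i.i.d.\ witnesses from the proof of Theorem~\ref{thm:homo_forces_cop}, whereas the paper simply cites the approximating family of Theorem~\ref{thm:gaffke_eq_cop} and notes it lies in $\mathcal{Q}'$; you also make explicit the measurability of $\x \mapsto \mathcal{N}^\alpha_{\mu_\text{max}}(\x;\mathcal{Q}',D^*_\alpha)$ needed to invoke Theorem~\ref{thm:cond_opt}, which the paper leaves implicit.
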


\begin{proof}
Since $\mathcal{Q}' \subseteq \mathcal{Q}_\text{ind}$,
\begin{equation*}
    \mathcal{N}^\alpha_{\mu_\text{max}}(\x; \mathcal{Q}',D_\alpha^*) \geq \mathcal{N}^\alpha_{\mu_\text{max}}(\x; \mathcal{Q}_\text{ind},D_\alpha^*) = \mathcal{G}^\alpha(\x).
\end{equation*}
On the other hand, for every $\eta > 0$, Theorem~\ref{thm:gaffke_eq_cop} shows that there exists an i.i.d.  law $Q_\eta \in \mathcal{Q}'$ with two-point marginals satisfying $Q_\eta[\Omega(\x,D_\alpha^*)] > \alpha$ and $\mu_{\max}[Q_\eta] < \mathcal{G}^\alpha(\x) + \eta$. Therefore
\begin{equation*}
    \mathcal N^\alpha_{\mu_{\max}}
      (\x;\mathcal Q',D_\alpha^*)
    \leq
    \mathcal G^\alpha(\x)+\eta.
\end{equation*}
Equality follows by allowing $\eta \rightarrow 0$ and conditional optimality follows by Theorem~\ref{thm:cond_opt}.
\end{proof}

\section{Conclusion}

We have presented a framework for reasoning about lower confidence bounds (LCBs) on scalar parameters of probability laws from broad classes of distributions including all joint,  product, and i.i.d. laws. In the setting of product laws and the maximum marginal mean parameter, we showed that each such LCB is conditionally optimal with respect to the total preorder it induces and are approximated from above by a family of product laws whose marginals are supported on at most two points. We went on to show that Gaffke's bound fits these same criteria. 

\section{Disclosure}

AI tools were used in the preparation of this document. The authors assume sole responsibility for any errors or omissions.

\printbibliography

\end{document}